\documentclass[reqno,final,11pt]{amsart}
\usepackage{natbib}  
\usepackage{fancyhdr} 
\usepackage{color} 
\usepackage{hyperref} 
\usepackage{graphicx} 
\usepackage{geometry}
\geometry{
  top=1in,            
  inner=1in,
  outer=1in,
  bottom=1in,
  headheight=3ex,       
  headsep=2ex,          
}
\usepackage[utf8]{inputenc}
\usepackage[T1]{fontenc}
\usepackage{verbatim}

\usepackage{tikz}
\usetikzlibrary{calc,shapes,backgrounds,arrows}
\usepackage{amssymb}


\definecolor{aleacolor}{rgb}{0.16,0.59,0.78}

\hypersetup{
breaklinks,
colorlinks=true,
linkcolor=aleacolor,
urlcolor=aleacolor,
citecolor=aleacolor}

\pagestyle{fancy} \fancyhf{} \fancyhead[RO,LE]{\small\thepage}
\fancyhead[RE]{\small\shortauthors} \fancyhead[LO]{\small\shorttitle}

\renewcommand{\cite}{\citet}

\theoremstyle{plain}
\newtheorem{theorem}{Theorem}[section]                                          
                          
\newtheorem{lemma}[theorem]{Lemma}

\theoremstyle{definition}
\newtheorem{definition}[theorem]{Definition}
\theoremstyle{remark}
\newtheorem{remark}[theorem]{Remark}

\makeatletter \@addtoreset{equation}{section} \makeatother


\newtheorem{condition}[theorem]{\bf Condition}
\newcommand{\N}{\mathbb{Z}_{+}}

\newcommand{\R}{\mathbb{R}}
\newcommand{\PP}{\mathbb{P}}
\newcommand{\E}{\mathbb{E}}
\newcommand{\ve}{\varepsilon}
\newcommand{\xe}{\frac{x}{\varepsilon\varepsilon_{\gamma}^{1/2}}}
\newcommand{\ti}{\theta(\frac{x}{\left\vert x\right\vert})} 


\begin{document}

\title[LDP for Peano Phenomenon with Homogeneous Drift.]{Large Deviations and the Peano Phenomenon in Stochastic Differential Equations with Homogeneous Drift.}

\author{Bermolen~Paola}
\address{Universidad de la República, Uruguay}
\email{paola@fing.edu.uy} 

\author{Goicoechea~Valeria}
\email{vgoicoechea@fing.edu.uy} 

\author{León~José Rafael}
\email{rlramos@fing.edu.uy} 

\thanks{This work has been partially supported by ANII, FCE-3-2024-1-180711. }
\subjclass[60F10]{60H10, 34F05, 60J35} 
\keywords{Peano Phenomenon, Large Deviations}

\begin{abstract}
We consider a diffusion equation in  $\R^d$ with drift equal to the gradient of a homogeneous potential of degree $1+\gamma$, with $0<\gamma<1$, and local variance equal to $\ve^2$ with $\ve\to0$. The associated deterministic system for $\ve=0$ has a potential that is not a Lipschitz function on a neighbourhood of the origin. Therefore, an infinite number of solutions exist, known as the Peano phenomenon.  In this work, we study first- and second-order large deviations for a noisy system, generalizing previous results for the specific potential $b(x)=x |x|^{\gamma-1}$. For first-order large deviations, we recover the rate function from the well-known work of Freidlin and Wentzell. For the second-order large deviation, we use a refinement of Carmona-Simon
bounds for the eigenfunctions of a Schrödinger operator, and prove that the exponential behavior of the process depends only on the ground state of such an operator. Moreover, a refined study of the ground state allows us to obtain the large deviation rate function explicitly and to deduce that the family of diffusions converges to the set of extreme solutions of the deterministic system. 
\end{abstract}

\maketitle

\section{Introduction and main results}\label{section:Introduction}

In 1890, Peano addressed the existence of solutions to ordinary differential equations (ODEs) driven by continuous but non-Lipschitz functions. Meanwhile, he highlighted that the equation could have several solutions for some initial conditions. Those initial conditions are referred to as Peano's points, and the existence of several solutions is called Peano's phenomenon. However, as we will explain below, Peano's phenomenon disappears when the ODE is perturbed by Gaussian noise, since the resulting stochastic differential equation (SDE) typically has only one solution.

We consider the following stochastic differential equation
\begin{equation}\label{eq:EDE}
X_t^{\varepsilon}= x_0 + \int_{0}^{t} b\left(X_s^{\varepsilon}\right)\text{d}s + \varepsilon W_t, \quad t\in [0,T] \, (T<+\infty),
\end{equation}
where the drift $b:\R^d\rightarrow \R^d$ is a continuous function but not Lipschitz with $b(x_0)=0$, and $W_t = \left(W_t^1, \dots, W_t^d \right)$ is a $d$-dimensional Brownian motion. Under weak conditions on $b$, it is satisfied that Equation  \eqref{eq:EDE} admits an unique strong solution $\left\{X^{\ve}_t \right\}_{t\in [0,T]}$ (see \cite{Fedrizzi_2011}). Intuitively, this is because Brownian motion takes the process instantly away from $x_0$, which is where the unperturbed equation
\begin{equation}\label{eq:ODE}
x(t)= x_0 + \int_0^t b\left(x(s)\right)\text{d}s, \quad t\in [0,T]
\end{equation}
has uniqueness problems. Note that Equation \eqref{eq:ODE} only has uniqueness problems on the set of zeros of $b$. For simplicity, we assume that $b$ has a single zero (or Peano's point) and is the $0$ of $\R^d$. 

As the noise restores uniqueness to Peano's phenomenon, a natural question is to address the limit of the solution of the stochastic equation \eqref{eq:EDE} as the intensity $\ve$ of the noise tends to $0$. Such a procedure is referred to as taking the fluid limit of the family of processes $\left\{ X^\ve \right\}_\ve$. The intuition is that the fluid limit should select some \emph{important} solutions among all the solutions of the original ODE since those are the solutions that are stable under perturbation. Because they are obtained by forcing the dynamics at random, those important solutions should be regarded as the most meaningful from a physical point of view.  

This study builds extensively on previous works, which will be cited as they arise. Nevertheless, we extend the analysis to encompass a broader class of singular potentials and more general SDEs. The main result in this context is due to \cite{Baldi} and \cite{Bafico}. For the one-dimensional case,  they prove that the law of stochastic processes $\left\{X_t^{\ve}\right\}_t$ concentrates on the set of \emph{extremal solutions} \footnote{The solutions that start instantaneously from Peano's points are referred to as \emph{extremal solutions}.} 
$\varphi_1$ and $\varphi_2$ when $\ve$ tends to zero. That is, if $\PP^{\ve}$ is the law of $X^{\ve}$, then $\PP^{\ve}$ converges weakly to $\alpha \delta_{\varphi_1}+(1-\alpha)\delta_{\varphi_2}$, where $\delta_{\varphi_i}$ is the Delta measure of the extremal solution $\varphi_i$, and $\alpha$ is computed explicitly. Another way to study the convergence of $X^\ve$ as $\ve\to 0$  is to study the large deviations. If the drift $b$ is not a Lipschitz function, Equation \eqref{eq:EDE} does not fall within the context of the well-known Freidlin and Wentzell theory, see \cite{F&W}. There are results of large deviations for families of processes that are solutions of Equation \eqref{eq:EDE} in the case where the drift is $b(x)= \left\vert x \right\vert^{\gamma-1}x$ with $\gamma \in (0,1)$ (see \cite{Herrmann} for the case $d=1$ and \cite{Pappalettera} in the case $d>1$). 

In this paper, we generalize the study of large deviations for drifts of the form $b(x)=\nabla U(x),$ where $U:\R^d \to \R$ is a homogeneous function. The idea of taking this kind of drift arose from observing that the homogeneity of $b(x)= \left\vert x \right\vert^{\gamma-1}x$ played a significant role in the work of \cite{Herrmann} and \cite{Pappalettera}. 
Moreover, a fluid limit study is proposed in \cite{delarue2019} under an extensive list of hypotheses for drifts of the form 
\[
b(x)=\nabla U(x), \text{ where } U(0)=0, \text{ and } U(x)= \theta(\frac{x}{|x|})|x|^{1+\gamma} \text{ if } x\neq 0,
\]
being $\theta$ a function defined on the sphere. However, the path we use for the study of the LDP is different and arises from combining the ideas of \cite{Herrmann} and \cite{Pappalettera} with a refinement of the Carmona-Simon exponential bounds for the eigenvectors of Schrödinger operators analyzed in \cite{Carmona78}, \cite{Carmona79}, and \cite{Carmona81}.  
The hypothesis that the drift $b$ comes from a homogeneous potential allows us to establish a relationship between the semigroups 
\[
P_t^{X^\ve}(x)(x)=\E \left[ f(X_t^\ve)|X_0^\ve=x\right] \text{ and } 
T_t^V(f)(x)= \E \left[ f(W_t) e^{-\int_0^t V(W_s)\text{d}s}|W_0=x\right],
\]
for a potential $V$ to be defined in terms of $U$.
By refining Carmona-Simon bounds, we prove 
that the exponential behavior of $X_t^\ve$ depends only on the principal eigenvalue and eigenvector of the linear generator of $T_t^V$. One of our main contributions is a comprehensive analysis of the principal eigenvalue of such an operator in the large deviation rate function.

\bigskip
We will analyze two types of large deviations: a first-order large deviation principle with velocity $\ve^{-2}$ and a second-order large deviation principle with a lower velocity of convergence, depending on the degree of homogeneity of the potential $U$.  For this reason, we recall below the definitions of the large deviation principle and the exponential tightness condition for a rate $\lambda$. 

\begin{definition} Let be $\left(\mathcal{X}, d\right)$ a Polish space and $\left\{ \PP^{\varepsilon} \right\}_{\varepsilon}$ a family of probability measures defined on the $\sigma$-algebra $\mathcal{B}\left(\mathcal{X}\right)$ with $\varepsilon \rightarrow 0$. Let be
$I: \mathcal{X}\rightarrow [0, +\infty]$ a lower semicontinuous function, and $\lambda:\R^{+}\rightarrow \R^{+}$ such that $\lambda(\varepsilon)\rightarrow +\infty$ if $\varepsilon\rightarrow 0$. We say that $\left\{ \PP^{\varepsilon} \right\}_{\varepsilon}$ verify an LDP with \emph{rate function} $I$ and \emph{rate} $\lambda$ if for all open $A \subset \mathcal{X}$,
\[
\liminf_{\varepsilon \rightarrow 0} \lambda(\varepsilon)^{-1} \log \PP^{\varepsilon}(A) \geq - \inf_{x\in A} I(x),
\]
and  for all closed $C \subset \mathcal{X}$,
\[
\limsup_{\varepsilon \rightarrow 0} \lambda(\varepsilon)^{-1} \log \PP^{\varepsilon}(C) \leq - \inf_{x\in C} I(x).
\]
We say that $\left\{\PP^{\ve}\right\}_{\ve}$  is \emph{exponentially tight} with rate $\lambda(\varepsilon)$ if for each $\beta>0$ there exists a compact $K_{\beta} \subseteq \mathcal{X}$ such that 
\[ \limsup_{\ve \to 0} \lambda(\ve)^{-1} \log \PP^{\ve}\left(K_{\beta}^c\right) \leq -\beta.
\]
If $\left( \Omega, \mathcal{A}, \PP \right)$ is a probability space and $\left\{ X^{\varepsilon}\right\}_{\varepsilon}$ are random variables defined on $\left( \Omega, \mathcal{A}, \PP \right)$, we say that $\left\{ X^{\varepsilon}\right\}_{\varepsilon}$ verify an LDP if the induced probability measures defined by $\PP^\ve(A):=\PP(X^\ve \in A)$ verify it.
\end{definition}

\bigskip
We are interested in the case where the random variable $X^{\varepsilon}$ is the strong solution of Equation \eqref{eq:EDE}. Below, we present the two main results of this paper. First, we present the first-order LDP with rate $\ve^{-2}$ for the general case where the drift $b$ has at most linear growth. 
\begin{theorem}[First-order LDP] \label{thm:First LDP}
Let $X^{\ve}=\left\{X_t^{\ve}\right\}_{t\in [0,T]}$ ($0<T<\infty$) be the strong solution of Equation \eqref{eq:EDE} where the drift $b:\R^d \to \R^d$ verifies the following condition,
\[
b(0)=0 \text{ and there exists } A, B\geq 0 \text{ such that }  \left\vert b(x) \right\vert \leq A \left\vert x \right\vert + B \quad \forall x.
\]
Then, the family of stochastic processes $\left\{X^{\ve}\right\}_{\ve}$ verify an LDP on $C_0\left([0,T], \R^d\right)$ when $\ve \to 0$, with rate $\ve^{-2}$ and rate function $I_1: C_0\left([0,T], \R^d\right)\to [0, +\infty]$ such that
\begin{equation}\label{eq:I1}
I_1(\varphi)= \begin{cases} \frac{1}{2}\int_0^T \left\vert\dot{\varphi}(s)-b\left(\varphi(s)\right)\right\vert^2 \text{d}s, & \text{ if } \varphi \in \mathcal{AC}_0\left([0,T], \R^d\right), \\
+\infty, & \text{otherwhise}.
\end{cases}
\end{equation}
\end{theorem}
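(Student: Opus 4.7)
Since $b$ is merely continuous with linear growth, the map from the driving noise $\ve W$ to the solution $X^\ve$ need not be continuous, so the classical contraction principle applied to Schilder's theorem does not yield Theorem~\ref{thm:First LDP} directly. The plan is to approximate $b$ by a sequence of globally Lipschitz vector fields $b_n$, apply the Freidlin-Wentzell theorem \cite{F&W} to each approximating SDE, and then transfer the LDP to $X^\ve$ by combining an exponential good approximation estimate with a $\Gamma$-convergence of the rate functions.

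\textbf{Exponential tightness and Lipschitz approximation.} Condition~\ref{Condition1 for b} combined with Gronwall's lemma gives the pathwise bound $\sup_{t\le T}|X^\ve_t| \le (|x_0|+BT+\ve\sup_{t\le T}|W_t|)e^{AT}$. Together with Gaussian concentration for $\sup_t|W_t|$ and a Kolmogorov-type estimate on the modulus of continuity of $\ve W$ (transferred to $X^\ve$ by a Gronwall-type argument applied to time increments), this yields exponential tightness at speed $\ve^{-2}$ in $C_0([0,T],\R^d)$. Next, pick $(b_n)_{n\ge 1}$ globally Lipschitz, sharing the linear growth constants of $b$, with $b_n\to b$ uniformly on every compact set (e.g.\ by mollification followed by a smooth truncation outside a growing ball). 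Let $X^{\ve,n}$ denote the strong solution of \eqref{eq:EDE} with drift $b_n$; by Freidlin-Wentzell, the family $\{X^{\ve,n}\}_\ve$ satisfies an LDP at speed $\ve^{-2}$ with rate function $I_1^n$ obtained from \eqref{eq:I1} by replacing $b$ with $b_n$.

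\textbf{Exponential closeness and $\Gamma$-convergence.} On the event $E_R:=\{\sup_t|X^\ve_t|\le R,\ \sup_t|X^{\ve,n}_t|\le R\}$, using
\[
X^\ve_t-X^{\ve,n}_t = \int_0^t \left(b(X^\ve_s)-b_n(X^\ve_s)\right)\text{d}s + \int_0^t \left(b_n(X^\ve_s)-b_n(X^{\ve,n}_s)\right)\text{d}s
\]
and applying Gronwall to the second term via the Lipschitz constant $L_n$ of $b_n$, one obtains the deterministic bound
\[
\sup_{t\le T}|X^\ve_t-X^{\ve,n}_t|\le T\,\omega_n(R)\,e^{L_n T}, \qquad \omega_n(R):=\sup_{|x|\le R}|b(x)-b_n(x)|.
\]
Choosing $R=R(n)\to\infty$ slowly enough that this bound tends to $0$, and using the exponential tightness from Step 1 to control $\PP(E_{R(n)}^c)$, yields the exponential good approximation
\[
\lim_{n\to\infty}\limsup_{\ve\to 0}\ve^2\log\PP\!\left(\sup_{t\le T}|X^\ve_t-X^{\ve,n}_t|>\delta\right) = -\infty
\]
for every $\delta>0$. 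A standard argument using the locally uniform convergence $b_n\to b$ on the compact image of any limiting path, combined with the lower semicontinuity of the $L^2$ norm on derivatives, shows that $I_1^n$ $\Gamma$-converges to $I_1$ on $C_0([0,T],\R^d)$, with recovery sequence $\varphi_n\equiv\varphi$.

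\textbf{Assembly and main obstacle.} The three ingredients---LDPs for $X^{\ve,n}$, exponential good approximation, and $\Gamma$-convergence $I_1^n\to I_1$---then combine through the standard exponential-approximation lemma for LDPs with a variable rate function to deliver the LDP for $\{X^\ve\}_\ve$ at speed $\ve^{-2}$ with rate function $I_1$. The crux of the argument lies in the exponential closeness step: because $b$ is not Lipschitz, Gronwall applied to $|X^\ve-X^{\ve,n}|$ yields only a bound involving $L_n$, which may blow up as $n\to\infty$, so the trade-off between $R(n)$, $\omega_n(R)$ and $L_n$ must be tuned carefully. The linear growth in Condition~\ref{Condition1 for b} is precisely what makes this balancing possible at the exponential scale $\ve^{-2}$, and this is where the non-Lipschitz Peano behavior of $b$ near the origin is defused by localizing on a large bounded region.
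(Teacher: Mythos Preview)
Your exponential-tightness paragraph is fine and essentially matches what the paper does. The problem is the exponential-closeness step. Your deterministic Gronwall bound on $E_R$ reads
\[
\sup_{t\le T}|X^\ve_t-X^{\ve,n}_t|\le T\,\omega_n(R)\,e^{L_n T},
\]
and you assert that the trade-off between $\omega_n(R)$ and $L_n$ can be ``tuned carefully'' thanks to linear growth. But linear growth controls only the behaviour at infinity; the non-Lipschitz singularity of $b$ is at the origin, and that is what forces $L_n\to\infty$. For the prototypical drift $b(x)=|x|^{\gamma-1}x$, any reasonable Lipschitz regularisation on a neighbourhood of radius $r_n\to 0$ gives $L_n\sim r_n^{\gamma-1}$ while $\omega_n(R)\gtrsim r_n^{\gamma}$ (independently of $R\ge 1$), so $\omega_n(R)e^{L_nT}\sim r_n^{\gamma}\exp(Tr_n^{\gamma-1})\to\infty$. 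The choice of $R(n)$ plays no role here since the worst approximation error already occurs inside the unit ball. In short, the exponential-good-approximation property you need does not follow from this Gronwall argument, and the final sentence of your proposal papers over a real obstruction rather than resolving it.

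The paper avoids this issue entirely by a different route: it invokes a lemma of Pappalettera (Theorem~2.14 in \cite{Pappalettera}) which says that the Freidlin--Wentzell LDP with rate $I_1$ holds as soon as (i) the family is exponentially tight at speed $\ve^{-2}$ and (ii) the Dol\'eans exponential $\mathcal{E}\!\left(-\ve^{-1}\int_0^{\cdot} b(X^\ve_s)\,\mathrm{d}W_s\right)$ is a true martingale. Condition (i) is verified as you do; condition (ii) is checked via Novikov's criterion, using only the linear-growth bound $|b(X^\ve_s)|\le A|X^\ve_s|+B$ together with the Gronwall estimate on $\sup_s|X^\ve_s|$ and exponential integrability of $\sup_s|W_s|$. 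This Girsanov-based argument transfers Schilder's theorem directly to $X^\ve$ without ever approximating $b$ by Lipschitz fields, so the blow-up of $L_n$ simply never appears.
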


The proof of Theorem \ref{thm:First LDP} is presented in Section \ref{section:First LDP}. This result shows that an extension of Freilin-Wentzell results is possible when the drift has at most linear growth, even though Equation \eqref{eq:EDE} is not within the hypotheses of their work. As a corollary, we trivially obtain that $ X^\ve$ converges to the set formed by the infinite solutions of the ordinary differential equation \eqref{eq:ODE}. 
Obviously, this result provides little information. For this reason, it is necessary to study large deviations at a slower velocity, which allows us to distinguish the most probable solutions within this set. This is done in the following theorem, where a study of second-order LDP is carried out for the particular case in which the drift comes from a homogeneous potential.

\begin{theorem}[Second-order LDP]\label{thm:Second LDP}
Assume moreover that the drift $b:\R^d \to \R^d$ is such that $b(x)=\nabla U(x)$, being $U:\R^d \to \R$ with $U(0)=0$ and $U(x)=\ti \left\vert x \right\vert^{1+\gamma}$ if $x\neq 0$, where $\gamma \in (0,1)$ and $\theta:D\supset \mathbb{S}^{d-1} \to \R$ is a positive and twice differentiable function in a open subset $D\subset \R^d$. 
Let $\ve_\gamma = \ve^{2\frac{1-\gamma}{1+\gamma}}$ and $\{X^{\ve} \}_{\ve}$ be the family of strong solutions of Equation \eqref{eq:EDE}. Then, $\{X^{\ve} \}_{\ve}$  verify an LDP with rate $\ve_\gamma^{-1}$ and rate function $I_2:C_0\left([0,T], \R^d\right) \to [0, +\infty]$ given by
\begin{equation}\label{eq:I2}
I_2(\varphi)=\begin{cases} \lambda_1 T - \lambda_1\left(T-t_0^\varphi\right)^+, & \text{ if } \varphi \text{ is solution of Equation \eqref{eq:ODE}},\\
+\infty, & \text{ if not.}\end{cases}
\end{equation}
$\lambda_1>0$ is the first eigenvalue (corresponding to the groundstate $\psi_1$) of the Schrödinger operator
\[
-\mathcal{L}(f)=-\frac{1}{2} \Delta f + V.f,
\]
where the potential is the function $V:\R^d \to \R$ such that 
\[
V(x)=\frac{1}{2}\left(\left\vert b(x)\right\vert^2 + {\rm div}\,b(x)\right)=\frac{1}{2}\left(\left\vert \nabla U(x)\right\vert^2 + \Delta U(x)\right).
\]
For a given solution $\varphi$ of Equation \eqref{eq:ODE}, $t_0^\varphi$ denotes the time of exit from $0$, i.e. $t_0^\varphi= \sup \{ t\geq 0: \, \varphi(t)=0\}$.
\end{theorem}

This is the main contribution of the paper. The proof of Theorem \ref{thm:Second LDP} is presented in Section \ref{section:Second LDP}. For the proof, we first analyze the exponential behavior of the density function $p^\ve(t,x)$ of the random variable $X_t^\ve$ for a fixed $t$, conditioned to $X_0^\ve = x$. Then, we observe that this density can be written in terms of the integral kernel of the Schrödinger semigroup 
\[
T_t(f)(x)=\E \left[f(W_t)e^{-\int_0^t V(W_s)\text{d}s}|W_0=x\right],
\]
whose infinitesimal generator is the operator $-\mathcal{L}(f)=-\frac{1}{2}\Delta f + V.f$. We prove that the exponential behavior of $p^\ve(t,x)$ only depends on $U$ and $\psi_1$, the ground state of $-\mathcal{L}$. Then, from refining the Carmona-Simon bounds for $\psi_1$ in our particular case, we succeed in proving that if $\ve_\gamma= \ve^{2 \frac{1-\gamma}{1+\gamma}}$, then the limit $\underset{\ve \to 0}{\lim} \ve_\gamma \log\left(p^\ve(t,x)\right)$ exists and it is $-\lambda_1 t-g(x)$, being $\lambda_1$ the first eigenvalue of $-\mathcal{L}$, and $g$ the only homogeneous solution of the partial differential equation 
\[
\left\langle \nabla U(x), \nabla g(x)\right\rangle = -\lambda_1.
\]
The existence and uniqueness of such a function $g$ will be deduced from the proof of the theorem. Finally, from the study of the exponential behavior of $p^\ve(t,x)$, we derive an LDP for the family of stochastic processes $\left\{X^\ve \right\}_\ve$, and prove that the rate function $I_2(\varphi)$ only depends on the final time $T$, being it $-\lambda_1 T -g\left(\varphi(T)\right)$ if $\varphi$ is a solution of Equation \eqref{eq:ODE}. Moreover, we deduce that, in this case, $g\left(\varphi(t)\right)=-\lambda_1  \left(t-t_0^\varphi\right)^+$ for all $0\leq t\leq T$.  As a corollary, we deduce that $X^\ve$ converges to the set of extremal solutions of Equation \eqref{eq:ODE}; i.e., the fluid limit is not a single trajectory (as occurs in a classical fluid limit) but a set of trajectories.

\bigskip

Finally, in Section \ref{section:final comments}, we briefly present the work of \cite{Herrmann} and \cite{Pappalettera}, and offer concluding remarks on the possibility of extending their work and the results presented in this article. There, we also comment on the difficulty we encountered in solving this problem, arising from the study of the convergence of the nonlinear semigroups associated with the stochastic processes $X^\ve$, due to the impossibility of proving the uniqueness of viscosity solutions for the Hamilton-Jacobi equations involved.

\bigskip
{\bf Notation comment:} for typing convenience, we write the Euclidean norm in $\R^d$ as $\left\vert . \right\vert$; $f(x)\approx g(x)$ means that $\underset{x \to .}{\lim} \frac{f(x)}{g(x)}=1$, and $f(x) \lesssim g(x)$ means that $0<\underset{x\to .}{\lim} \frac{f(x)}{g(x)} \leq 1$. $C_0\left([0,T], \R^d\right)$ and $ \mathcal{AC}_0\left([0,T], \R^d\right)$ denote respectively the space of functions $\varphi:[0,T] \rightarrow \R$ continuous and absolutely continuous such that $\varphi(0)=0$. $L^1_{loc}(\R^d)$ refers to the space of locally integrable functions $f:\R^d \to \R$, and $f\in L^p(\R^d)$ if moreover $\int \left\vert f(x) \right\vert^p \text{d}x < \infty$.  


\section{Proof of Theorem \ref{thm:First LDP} (First-order LDP)} \label{section:First LDP}

In this section, we present a proof of Theorem \ref{thm:First LDP}, which consists of a generalization of the work of Freidlin-Wentzell for the general case of drifts that exhibit at most linear growth. This result is not original, as it is based on the following lemma, presented as Theorem {\bf 2.14} in Pappalettera's Master's Thesis (see \cite{Pappalettera}) for the general case of continuous and unbounded drift functions $b:\R^d \to \R^d$. Furthermore, in \cite{Pappalettera2022}, this result is generalized for stochastic differential equations in infinite-dimensional Hilbert spaces perturbed by a cylindrical Wiener process.  

\begin{lemma}[Theorem 2.14 from \cite{Pappalettera}]\label{thm:Pappalettera}
Consider the family $\left\{ X^\ve \right\}_\ve$ of solutions of Equation \eqref{eq:EDE} where the drift $b: \R^d \to \R^d$ is a continuous function. If
\begin{enumerate}
\item[($H_1$)] $\left\{ X^\ve \right\}_\ve$ is exponentially tight with rate $\ve^{-2}$,
\item[($H_2$)] $\forall \ve>0$, the Doléans exponential $\mathcal{E}\left(-\frac{1}{\ve} \int_0^. b(X_s^\ve) \text{d}W_s \right)$ is a martingale on $[0,T]$,
\end{enumerate}
then $\left\{ X^\ve \right\}_\ve$ verify an LDP with rate $\ve^{-2}$ and rate function $I_1$ defined on \eqref{eq:I1}.
\end{lemma}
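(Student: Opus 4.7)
The plan is to combine Girsanov's theorem, enabled by $(H_2)$, with Schilder's theorem for rescaled Brownian motion, and to upgrade the resulting local bounds to a full LDP by invoking the exponential tightness $(H_1)$. A preliminary observation is that $I_1$ is lower semicontinuous on $C_0([0,T],\R^d)$: along a sequence $\varphi_n\to\varphi$ uniformly with $\sup_n I_1(\varphi_n)<\infty$, the family $\dot\varphi_n-b(\varphi_n)$ is bounded in $L^2$; passing to a weak limit and using continuity of $b$ together with uniform convergence $b(\varphi_n)\to b(\varphi)$ identifies this limit as $\dot\varphi-b(\varphi)$, and weak $L^2$ lower semicontinuity of the norm closes the argument.

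By $(H_2)$ one defines $\tilde\PP^\ve$ on $\mathcal{F}_T$ through $d\tilde\PP^\ve/d\PP=\mathcal{E}(-\ve^{-1}\int_0^{\cdot}b(X^\ve_s)\,dW_s)_T$. Girsanov's theorem then produces a $\tilde\PP^\ve$-Brownian motion $\tilde W_t=W_t+\ve^{-1}\int_0^t b(X^\ve_s)\,ds$ under which the SDE collapses to $X^\ve_t=x_0+\ve\tilde W_t$. Schilder's theorem applied to $\ve\tilde W$ then yields an LDP for $\{X^\ve\}$ under $\tilde\PP^\ve$ with speed $\ve^{-2}$ and rate $I_0(\varphi)=\tfrac12\int_0^T|\dot\varphi|^2\,ds$ on paths with $\varphi(0)=x_0$, while the inverse density admits the representation
\[
\frac{d\PP}{d\tilde\PP^\ve}\Big|_{\mathcal{F}_T}=\exp\!\left(\frac{1}{\ve^2}\int_0^T b(X^\ve_s)\cdot dX^\ve_s-\frac{1}{2\ve^2}\int_0^T|b(X^\ve_s)|^2\,ds\right),
\]
whose exponent evaluated along any smooth $\varphi$ equals $\ve^{-2}(I_0(\varphi)-I_1(\varphi))$. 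This algebraic identity is what drives the transfer of rates.

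The lower bound is established path by path: for $\varphi\in\mathcal{AC}_0$ with $\varphi(0)=x_0$ and $\delta>0$, write $\PP(\|X^\ve-\varphi\|_\infty<\delta)$ as an expectation under $\tilde\PP^\ve$ of the indicator times the density; uniform continuity of $b$ combined with an It\^o-type approximation replaces $b(X^\ve_s)$ by $b(\varphi_s)$ up to an error $o_\delta(1)$ on the tube, modulo a set of exponentially small probability on which the quadratic variation is anomalous, and combining this with Schilder's lower bound gives $\liminf\ve^2\log\PP(\|X^\ve-\varphi\|_\infty<\delta)\geq -I_1(\varphi)-o_\delta(1)$. For the upper bound on a closed $C$, fix $\alpha<\inf_C I_1$, pick a compact $K_\beta$ from $(H_1)$ with $\beta>\alpha$, cover $K_\beta\cap C$ by finitely many $\delta$-tubes around paths $\varphi_i$ with $I_1(\varphi_i)\geq\alpha-o_\delta(1)$, and on each tube run the same local tilt combined with Schilder's upper bound to extract a cost of $\ve^{-2}I_1(\varphi_i)+o(\ve^{-2})$; summing the finitely many contributions and sending $\delta\to 0$, $\alpha\uparrow\inf_C I_1$, and $\beta\to\infty$ concludes.

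The main obstacle is that the log-density is not a continuous functional on path space in uniform topology, so Varadhan's lemma does not apply verbatim; this forces the tube-by-tube localization, whose delicate step is the uniform approximation of $\int_0^T b(X^\ve_s)\cdot dX^\ve_s$ by $\int_0^T b(\varphi_i)\cdot\dot\varphi_i\,ds$ on each tube, handled by It\^o's formula together with control of the quadratic variation and relying only on continuity of $b$. Exponential tightness $(H_1)$ is essential at this step, as it reduces the closed-set upper bound to a compact piece and makes the finite cover feasible.
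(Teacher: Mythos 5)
The paper does not prove this statement; it imports it verbatim as Theorem~2.14 of \cite{Pappalettera} and uses it as a black box, so there is no in-paper proof to compare against. Your skeleton — Girsanov unlocked by $(H_2)$, Schilder for the rescaled noise, tube-by-tube rate transfer, $(H_1)$ to promote the bounds to a full LDP — is the natural one and almost certainly what \cite{Pappalettera} implements. The preliminary lower semicontinuity argument for $I_1$ and the algebraic identity $\log(d\PP/d\tilde\PP^\ve)\big|_\varphi=\ve^{-2}(I_0(\varphi)-I_1(\varphi))$ are both correct.

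The gap is in the one step that carries all the weight. For the lower bound, the obstacle is not that the quadratic variation is ``anomalous'' off a negligible set: after the Cameron--Martin recentering at $\varphi$, the residual martingale $M_T=\ve^{-1}\int_0^T(b(X^\ve_s)-\dot\varphi_s)\cdot d\tilde B_s$ has quadratic variation $\ve^{-2}\int_0^T|b(X^\ve_s)-\dot\varphi_s|^2\,ds$, which on the tube stabilises around $2I_1(\varphi)/\ve^2$ and is in no sense small as $\delta\to0$. What actually rescues the estimate is a scale separation — the martingale is typically of size $O(\ve^{-1})$ while the deterministic part of the log-density is $O(\ve^{-2})$, so after multiplying by $\ve^2$ the stochastic contribution vanishes — and your sketch never isolates this. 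For the upper bound your plan is both heavier and does not close: squeezing the Doléans exponential over the stopped tube returns $\PP(\text{tube})\leq e^{-\ve^{-2}(I_1(\varphi)-o_\delta(1))}\cdot e^{\ve^{-2}(I_1(\varphi)+o_\delta(1))}=e^{o_\delta(1)/\ve^2}$, i.e.\ nothing. Girsanov is not needed there: since $\ve W_\cdot = X^\ve_\cdot-x_0-\int_0^\cdot b(X^\ve_s)\,ds$ is a continuous functional of the path, the event $\|X^\ve-\varphi\|_\infty\leq\delta$ forces $\ve W$ to lie within $\delta+T\omega_b(\delta)$ of $\varphi-x_0-\int_0^\cdot b(\varphi_s)\,ds$, and Schilder's upper bound together with lower semicontinuity of the Schilder rate gives the cost $I_1(\varphi)-o_\delta(1)$ directly; combined with $(H_1)$ and a finite cover this closes the upper half, reserving the Girsanov/Cameron--Martin machinery (with the scale-separation estimate made explicit) for the lower half alone.
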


The proof of this theorem is presented in detail in sections 2.3 and 2.4 of \cite{Pappalettera}. For completeness, we include an idea of the proof below. 

\begin{proof}
\emph{Step 1.} If $b$ is continuous and bounded, then $b$ is approximated by a sequence of Lipschitz functions $\left\{ b_n\right\} _n$, where Freidlin-Wentzell results are valid for each $\left\{ X^{n, \ve} \right\}_\ve$, the corresponding solution of Equation \eqref{eq:EDE} for $b_n$ instead of $b$. It is proved that the Freidlin-Wentzell action functional is still valid for $\left\{ X^\ve \right\}_\ve$. \emph{Step 2.} If $b$ is unbounded and continuous, then it is approximated by a sequence of continuous and bounded functions $\left\{ b_R \right\}_R$. Then, by Step 1, an LDP is valid for each $\left\{ X^{R, \ve} \right\}_\ve$, the corresponding solution of Equation \eqref{eq:EDE} for $b_R$ instead of $b$. Then it is proved that if ($H_2$) is verified, then the Freidlin-Wentzell condition (FW1) is verified for the rate $\ve^{-2}$ and $I_1$. Moreover, if the exponential tightness condition ($H_1$) holds, then $I_1$ has compact level sets, and the Varadhan condition (V2) holds. Finally, an LDP is verified with rates $\ve^{-2}$ and $I_1$.
\end{proof}
\bigskip

Assume that the drift $b:\R^d \to \R^d$ verifies the following condition,
\begin{condition}\label{Condition1 for b}
$b(0)=0$ and there exists $A, B\geq 0$ such that  $\left\vert b(x) \right\vert \leq A \left\vert x \right\vert + B \quad \forall x$.
\end{condition}
\bigskip

\begin{proof}[Proof of Theorem \ref{thm:First LDP}]
Due to the previous lemma, it suffices to prove that if the drift $b:\R^d \to \R^d$ verifies Condition \ref{Condition1 for b}, then $\left\{ X^\ve \right\}_\ve$ verifies conditions ($H_1$) (exponential tightness) and ($H_2$) (martingale property).
\bigskip

\emph{Exponential tightness condition:} Let $\beta>0$ be fixed, we want to construct a compact set $K_\beta \subseteq C_0\left([0,T], \R^d \right)$ such that $\underset{\ve \to 0}{\limsup} \, \ve^2 \log \PP\left( X^\ve \notin K_\beta\right) \leq -\beta$. Let us define for $\delta_1>0$ and $\delta_2>0$ to be chosen later, the following set:
\[
K_{\delta_1,\delta_2}=\left\{ f\in C_{0, \alpha}([0,T],\R^d): \, \left\Vert f\right\Vert_{\infty} \leq \delta_1; \,
\sup_{0\leq s <t\leq T} \frac{\left \vert f(t)-f(s)\right\vert}{(t-s)^\alpha}\leq \delta_2 \right\},
\]
being $C_{0,\alpha} \left([0,T], \R^d \right)$ the space of functions $f=(f_1, \dots, f_d):[0,T]\rightarrow \R^d$ such that each component $f_i$ is Hölder continuous of index $\alpha$, equipped with the norm
\[
\left\Vert f \right\Vert_{C_{0, \alpha}([0,T], \R^d)}:=\left\Vert f \right\Vert_{\infty} + \underset{0\leq s < t \leq T}{\sup} \frac{\left\vert f(t)-f(s)\right\vert}{(t-s)^{\alpha}}.
\]
Endowed with this norm, $C_{0, \alpha}([0,T], \R^d)$ is a Banach space. Note that the functions belonging to $K_{\delta_1, \delta_2}$ are equicontinuous and totally bounded, then by the Arzelá-Ascoli Theorem, the set $K_{\delta_1, \delta_2}$ is compact in the topology of the uniform convergence. So, it is enough to choose $\delta_1>0$ and $\delta_2>0$ such that $K_{\beta}:= K_{\delta_1, \delta_2}$ verifies $\underset{\ve \to 0}{\limsup} \, \ve^2 \log \PP\left( X^\ve \notin K_\beta\right) \leq -\beta$. Due to Condition \ref{Condition1 for b}, $X_t^{\varepsilon}$ verifies 
$\left\vert X_t^{\varepsilon} \right\vert \leq A \int_0^T \left\vert X_s^{\varepsilon}\right\vert \text{d}s + BT + \underset{s\leq T}{\sup} \left\vert \varepsilon W_s \right\vert,$
and 
$
\left\Vert X^\ve \right\Vert_{\infty}=\underset{s\leq T}{\sup} \left\vert X_s^{\varepsilon} \right\vert \leq \left( BT + \underset{s\leq T}{\sup} \left\vert \varepsilon W_s \right\vert \right)e^{AT},
$
by Gronwall lemma.

If 
$
\delta_1 < \left\Vert X^{\varepsilon}\right\Vert_{\infty} \leq ( BT + \underset{s\leq T}{\sup}\left\vert \ve W_s \right\vert) e^{AT},
$
then
$
\underset{s\leq T}{\sup} \left\vert \ve W_s \right\vert > \delta_1 e^{-AT}-BT,
$
and
\begin{gather*}
\PP\left( \left\Vert X^\ve \right\Vert_\infty > \delta_1 \right) \leq
\PP\left( \sup_{s\leq T} \left\vert \ve W_s \right\vert > \delta_1 e^{-AT}-BT  \right) \leq \ve \frac{4}{\sqrt{2\pi}} \frac{d^{\frac{3}{2}} \sqrt{T}}{M}e^{-\frac{1}{2T}\frac{M^2}{\ve^2d}},
\end{gather*}
if we use the known bound for the Brownian motion
\begin{gather*}
\PP \left(\sup_{s\leq T} \left\vert \varepsilon W_s\right\vert >M \right) \leq \varepsilon \frac{1}{\sqrt{2\pi}} \frac{d^{\frac{3}{2}} \sqrt{T}}{M}e^{-\frac{1}{2T}\frac{M^2}{\varepsilon^2d}} 
\end{gather*}
for $M= \delta_1 e^{-AT}-BT>0$.

Moreover,
$
\underset{0\leq s<t \leq T}{\sup}\left\vert X_t^\ve - X_s^\ve \right\vert \leq \left(B(t-s) + \underset{0\leq s<t \leq T}{\sup} \ve \left\vert W_t - W_s \right\vert \right)e^{AT},
$ 
and
\[
\frac{\left\vert X_t^\ve - X_s^\ve \right\vert}{(t-s)^{\alpha}} \leq C_1 \left((t-s)^{1-\alpha} + \ve \frac{\left\vert W_t-W_s\right\vert}{(t-s)^{\alpha}}\right)
\]
for some constant $C_1>0$. This inequality implies two things. First, that $X^\ve$ belongs to $C_{0,\alpha}([0,T], \R^d)$ if $\alpha\in (0,\frac{1}{2})$. In fact, for almost every $\omega$, we have
\[
\frac{\left\vert X_t^\ve - X_s^\ve \right\vert}{(t-s)^{\alpha}} \leq C_1 \left(T^{1-\alpha} + \ve \sup_{0\leq s<t \leq T} \left\{ \frac{\left\vert W_t-W_s\right\vert}{(t-s)^{\frac{1}{2}} \log^{\frac{1}{2}}\left(\frac{1}{t-s}\right)} (t-s)^{\frac{1}{2}-\alpha} \log^{\frac{1}{2}}\left(\frac{1}{t-s}\right)\right\}\right),
\]
which is bounded due to the modulus of continuity of Brownian motion. Moreover, for almost every $\omega$ the function $X^\ve$ is bounded in $[0,T]$. If
\[
\delta_2 < \sup_{0\leq s<t\leq T} \frac{\left\vert X_t^\ve - X_s^\ve \right\vert}{(t-s)^{\alpha}} \leq C_1 \left(T^{1-\alpha} + \left\Vert \ve W \right\Vert_{C_{0,\alpha}([0,T],\R^d)}\right),
\]
then $\left\Vert \ve W \right\Vert_{C_{0,\alpha}([0,T],\R^d)}>\frac{\delta_2}{C_1}-T^{1-\alpha}$. We choose $\delta_2$ sufficiently large such that $\frac{\delta_2}{C_1}-T^{1-\alpha}>0$. Since $W$ is a Gaussian random variable taking values on the separable Banach space $C_{0,\alpha}([0, T], \R^d),$ there exists a $t_0>0$ such that for every $t<t_0$ we have $\mathbb{E}\left[ e^{t \left\Vert W \right\Vert^2_{C_{0,\alpha}([0, T], \R^d)}}\right]<\infty$ (see Theorem {\bf 6.5} from \cite{Gine}). Then, for one of those $t<t_0$, we have
\begin{align*}
\PP \left( \sup_{0\leq s<t\leq T} \frac{\left\vert X_t^\ve-X_s^\ve \right\vert }{(t-s)^\alpha}>\delta_2\right) & \leq 
\PP \left( \left\Vert W \right\Vert_{C_{0,\alpha}([0,T], \R^d)} > \frac{\frac{\delta_2}{C_1}-T^{1-\alpha}}{\ve}\right)\\
& = \PP \left( e^{t  \left\Vert W \right\Vert^2_{C_{0,\alpha}([0,T], \R^d)}} > e^{\frac{t}{\ve^2}\left(\frac{\delta_2}{C_1}-T^{1-\alpha}\right)^2}\right)\\
& \leq C_2 e^{-\frac{t}{\ve^2}\left(\frac{\delta_2}{C_1}-T^{1-\alpha}\right)^2},
\end{align*}
with $C_2=\mathbb{E}\left[ e^{t  \left\Vert W \right\Vert^2_{C_{0,\alpha}([0,T], \R^d)}}\right] < \infty$.

Finally, 
\begin{align*}
\PP\left(X^\ve \notin K_{\delta_1,\delta_2}\right) & \leq 
\PP \left( \left\Vert X^\ve \right\Vert_\infty >\delta_1 \right) + \PP\left( \sup_{0\leq s<t\leq T} \frac{\left\vert X_t^\ve-X_s^\ve \right\vert }{(t-s)^\alpha}>\delta_2\right)\\
& \leq \ve \frac{1}{\sqrt{2\pi}} \frac{d^{\frac{3}{2}} \sqrt{T}}{M}e^{-\frac{1}{2T}\frac{M^2}{\ve^2d}} + C_2 e^{-\frac{t}{\ve^2}\left(\frac{\delta_2}{C_1}-T^{1-\alpha}\right)^2},
\end{align*}
and
\[
\limsup_{\ve \to 0} \ve^2 \log \PP\left(X^\ve \notin K_{\delta_1,\delta_2}\right) \leq \max \left\{ -\frac{1}{2T}\frac{M^2}{d}, -t (\frac{\delta_2}{C_1}-T^{1-\alpha})^2 \right\} = -\beta
\]
if we choose $\delta_1 = \left( \sqrt{2T\beta}d + BT \right)e^{AT}$ and $\delta_2=C_1 \left( \sqrt{\frac{\beta}{T}}+ T^{1-\alpha}\right)$.
\bigskip

\emph{Martingale condition:} It is enough to prove that for each $\ve>0$ the Novikov's condition holds, that is,
\[
\E \left[ e^{\frac{1}{2 \ve^2} \int_0^T \left\vert b(X_s^\ve)\right\vert^2 \text{d}s}\right]<\infty.
\]
Again, Condition \eqref{Condition1 for b} and Gronwall's lemma imply that $\underset{s\leq T}{\sup} \left\vert X_s^\ve \right\vert \leq \left(BT + \underset{s\leq T}{\sup} \left\vert \ve W_s \right\vert \right)e^{AT}$. Then,
\[
\left\vert b(X_s^\ve) \right\vert^2  \leq \left(A \left\vert X_s^\ve\right\vert + B \right)^2 
 \leq 2 \left(A^2 \left(\sup_{s\leq T}\left\vert X_s^\ve \right\vert \right)^2 + B^2 \right)
 := C_1 + \ve^2 C_2 \, \underset{s\leq T}{\sup} \left\vert  W_s \right\vert^2,
\]
and 
\begin{align*}
\E \left[ e^{\frac{1}{2 \ve^2} \int_0^T \left\vert b(X_s^\ve)\right\vert^2 \text{d}s} \right] 
&  \leq \E \left[ e^{\frac{T}{2\ve^2} \left(  C_1 + \ve^2 C_2 \, \underset{s\leq T}{\sup} \left\vert  W_s \right\vert^2 \right)} \right]
 = e^{\frac{C_1 T}{2\ve^2}}  \E \left[ e^{\frac{C_2 T}{2} \, \underset{s\leq T}{\sup} \left\vert  W_s \right\vert^2 } \right].
\end{align*}
For each $\ve$, $e^{\frac{C_1 T}{2\ve^2}}<\infty$ and, since $T<\infty$, the reflection principle and exponential integrability of Gaussian random variables imply that the last mean is finite:
\begin{align*}
\E \left[ e^{\frac{C_2 T}{2} \, \underset{s\leq T}{\sup} \left\vert  W_s \right\vert^2 } \right] & = \E \left[ e^{\frac{C_2 T}{2} \, \underset{s\leq T}{\sup} \left\vert  W_s \right\vert^2 } {\bf 1}_{\{ \underset{s\leq T}{\sup} \left\vert W_s \right\vert > a \}}\right] +
\E \left[ e^{\frac{C_2 T}{2} \, \underset{s\leq T}{\sup} \left\vert  W_s \right\vert^2 } {\bf 1}_{\{ \underset{s\leq T}{\sup} \left\vert W_s \right\vert \leq a \}}\right]\\
& \leq \E \left[ e^{\frac{C_2 T}{2} \, \underset{s\leq T}{\sup} \left\vert  W_s \right\vert^2 } \right] 2 \PP \left( \left\vert W_T \right\vert >a \right) + e^{\frac{C_2 T}{2} a^2},
\end{align*}
then, if we choose $a>0$ sufficiently large so that $1-2 \PP \left( \left\vert W_T \right\vert >a \right)>0$, we have
\[
\E \left[ e^{\frac{C_2 T}{2} \, \underset{s\leq T}{\sup} \left\vert  W_s \right\vert^2 } \right] \leq \frac{e^{\frac{C_2 T}{2} a^2}}{1-2 \PP \left( \left\vert W_T \right\vert >a \right)} < \infty.
\]
\end{proof}

\section{Proof of Theorem \ref{thm:Second LDP} (Second-order LDP)} \label{section:Second LDP}

In this section, we present the proof of Theorem \ref{thm:Second LDP}, which consists of a second-order LDP study for the particular case where the drift $b$, in addition to having at most linear growth, comes from a homogeneous potential $U$. As we mentioned before, the reason for considering this kind of drift comes from observing that the homogeneity of $b(x)=|x|^{\gamma -1}x$ played a significant role in the work of \cite{Herrmann} and \cite{Pappalettera}. A peculiarity of these functions is that their derivatives preserve the homogeneity property, which we will use to study large deviations. 
\bigskip

Let be $U:\R^d\to\R$ such that $U(0)=0$ and $U(x)=\ti |x|^{\gamma+1}$ if $x\neq 0$, where $\gamma\in(0,1)$ and $\theta:D \supset\mathbb{S}^{d-1}\to \R$ is a positive and twice differentiable function in an open $D\subset \R^d$. Let us consider the drift 
\begin{equation} \label{eq:b extended}
b(x)=\nabla U(x)=|x|^{\gamma}\left[\nabla\ti+ \left((1+\gamma)\ti -\left\langle \nabla \ti, \frac{x}{|x|} \right\rangle \right) \frac {x}{|x|} \right],
\end{equation}
if $x\neq 0$ and $b(0)=0$. Observe that $b$ is a homogeneous function of degree $\gamma$ that is continuous but non-Lipschitz on $\R^d$, however it is Lipschitz on any compact set on $\R^d\smallsetminus \{0\}$. As we will explain in more detail in the proof, we require that the function $\theta$ be positive to ensure that the solutions to Equation \eqref{eq:ODE} behave expansively and cannot return to the origin once they leave it. Let be $\theta_1:D \supset\mathbb{S}^{d-1}\to \R^d$ such that $b(x)=\theta_1(\frac{x}{|x|})|x|^\gamma$. Note that $\theta_1(\frac{x}{|x|})$ can be decomposed into a radial and a tangential component, given by $(1+\gamma)\theta(\frac{x}{|x|}) \frac{x}{|x|}$ and $\nabla \theta(\frac{x}{|x|})-\left\langle \nabla \theta(\frac{x}{|x|}), \frac{x}{|x|}\right\rangle \frac{x}{|x|}$ respectively. 
\bigskip

From Theorem \ref{thm:First LDP}, we know that a first-order LDP (with rate $\ve^{-2}$) is verified since $\left\vert b(x)\right\vert \leq a_\infty \left(\left\vert x\right\vert +1\right)$, and the growth condition is verified with $A=B=a_\infty = \underset{z\in \mathbb{S}^{d-1}}{\sup} |\theta_1(z)| < \infty$. In this section, we prove Theorem \ref{thm:Second LDP}; i.e. we prove that if $X^\ve$ is the strong solution of Equation \eqref{eq:EDE}, being $b$ the function defined in Equation \eqref{eq:b extended}, then $\left\{ X^\ve \right\}_\ve$ verify an LDP as $\ve \to 0$ with rate $\ve_\gamma^{-1}$, being $\ve_\gamma:=\ve^{2\frac{1-\gamma}{1+\gamma}}$, and rate function defined on Equation \eqref{eq:I2}. For the proof, we study first the exponential behavior of $p^\ve(t,x)$, the density function of the random variable $X_t^\ve$ for a fixed time $t\in [0,T]$, and conditioned to $X_0^\ve=x$. 
\bigskip

The proof of Theorem \ref{thm:Second LDP} is organized as follows.
In subsection \ref{subsection:Exponential behavior of the density}, we prove that the density $p^\ve(t,x)$ verifies
\[
p^\ve(t,x) = \frac{1}{\ve^d \ve_\gamma^{\frac{d}{2}}} e^{U\left( \xe\right)} \sum_{j=1}^{\infty} e^{-\lambda_j \frac{t}{\ve_\gamma}}\psi_j(0) \psi_j\left(\xe\right), 
\]
being $\lambda_j$ and $\psi_j$, respectively, the eigenvalues and normalized eigenfunctions of the Schrödinger operator 
$-\mathcal L(f)(x):=-\frac12\Delta (f)(x)+V(x)f(x)$. Moreover, we prove that the only term that matters at an exponential level for $p^\ve(t,x)$ is the one corresponding to the first eigenvector $\psi_1$ (the ground state of the Schrödinger operator), that is
\[
\lim_{\ve\to0}\ve_\gamma\log(p^\ve(t,x)) =
-\lambda_1 t +
\lim_{\ve\to0}\ve_\gamma\log \left[ e^{U\left( \xe\right) }\psi_1\left(\xe\right)\right].
\]
Possibly, this is the main contribution of the paper: to show why only $\psi_1$ (and $\lambda_1$) end up influencing the second-order large deviation rate function, and calculating this last limit for deducing an LDP for the rate $\ve_\gamma^{-1}$.

To prove this result and to compute the last limit, we make a refinement of the techniques proposed by Carmona-Simon to bound each function $e^U(x)\psi_j(x)$ when $|x| \to \infty$. To do this, we first need to verify that the potential $V$ satisfies the hypotheses of the Carmona-Simon results. This is donne in subsection \ref{subsection:Decomposition of the potential}, where we prove that the potential $V$ can be decomposed as $V=V_1-V_2$ such that $V_1$ is bounded below and $V_1\in L_{loc}^1(\R^d)$, and $V_2\geq 0$ with $V_2\in L^p(\R^d)$ for a certain $p>\frac{d}{2}$.

In subsection \ref{subsection:Upper bound for the density p}, we find an upper bound for the normalized (in the $L^2(\R^d)$ norm) eigenfunctions $\psi_j$ using Carmona-Simon techniques. Then, from this bound we prove in Lemma \ref{lemma:uniformly bounded exp(u)psij} a bound for each term $e^{U(x)}\psi_j(x)$ when $|x|\to \infty$, and we get the upper bound for the limit 
\[
\underset{\ve\to0}{\lim}\ve_\gamma\log \left[ e^{U\left( \xe\right) }\psi_1\left(\xe\right)\right] \leq -g(x)
\] 
in Lemma \ref{proposition: upper bound for p}, being $g$ a homogeneous function of degree $1-\gamma$.

In subsection \ref{Lower bound for the density p}, we get a lower bound for the previous limit, which coincides with the upper bound if, moreover, the function $g$ is a solution of the partial differential equation $\left\langle \nabla U(x), \nabla g(x) \right\rangle =-\lambda_1$.

Subsection \ref{subsection: About the existence of g} is devoted to discussing the existence and uniqueness of a homogeneous function $g$ that verifies the equation $\left\langle \nabla U(x), \nabla g(x)\right\rangle=-\lambda_1$.

Finally, in subsection \ref{subsection: Second order LDP}, we prove Theorem \ref{thm:Second LDP} from the lower and upper bounds obtained for the density $p^\ve(t,x)$.


\subsection{Exponential behavior of the density} \label{subsection:Exponential behavior of the density}
In this subsection, we present different representations for the density function of the random variable $X^\ve_t$ to study its exponential behavior when $\ve$ tends to $0$. 
\bigskip
 
\begin{lemma}\label{Prop:density p(t,x)}
Let be $p^\ve(t,x)$ the density of the r.v. $X^\ve_t$ (conditioning to have $X^\ve_0=x$) for a fixed time $t$. Then, 
\begin{equation} \label{eq:p(t,x)}
p^\ve(t,x)=\frac1{\ve^d(2\pi t)^{\frac d2}} e^{ U\left(\xe\right)-\frac{|x|^2}{2t\ve^2}} \E\left[e^{\int_0^{\frac t{\ve_\gamma}}V\left(W_s\right)ds} \big| W_{\frac{t}{\ve_\gamma}}=\frac x{\ve\ve_\gamma^{1/2}}\right],
\end{equation}
where 
\[
V(x)=\frac{1}{2}\left(|b(x)|^2 + {\rm{div}}\,b(x)\right)=\frac{1}{2}\left(|\nabla U(x)|^2+\Delta U(x)\right).
\]
Moreover, $p^\ve(t,x)$ can be written as
\begin{align} \label{eq: densidad Mercer}
p^\ve(t,x) & = \frac{1}{\ve^d \ve_\gamma^{\frac{d}{2}}} e^{U\left( \xe\right)} \sum_{j=1}^{\infty} e^{-\lambda_j \frac{t}{\ve_\gamma}}\psi_j(0) \psi_j\left(\xe\right), 
\end{align}
being $-\infty<\lambda_1 < \lambda_2 \leq \lambda_3\leq \dots$ and $\psi_j$ respectively the eigenvalues and normalized eigenfunctions (in the $L^2(\R^d)$ norm) of the Schrödinger operator 
$-\mathcal L(f)(x):=-\frac12\Delta (f)(x)+V(x)f(x)$.
\end{lemma}

\begin{proof}
If $f:\R^d \to \R$ is an arbitrary function, then  $\E\left(f(X_t^\ve)\right)= \int_{\R^d}f(x)p^\ve(t,x) \text{d}x$, and Equation \eqref{eq:p(t,x)} is obtained by analyzing the expectation $\E\left(f(X_t^\ve)\right)$. The proof follows the same scheme as the proof of Corollary 1 in \cite{Herrmann} and Proposition 3.5 in \cite{Pappalettera}. To give completeness to this article, we include the proof below.

Define $Z^\ve_t=\frac1\ve X^\ve_t$. Since $b$ is homogenous of degree $\gamma$, this process satisfies the SDE
\[
\begin{cases}
dZ^\ve_t=\frac1\ve b(\ve Z^\ve_t)\text{d}t+\text{d}W_t=\ve^{\gamma-1}b(Z^\ve_t)\text{d}t+\text{d}W_t,\\
Z^\ve_0=0.
\end{cases}
\]
Let's introduce its semigroup  $P^{Z^\ve}_tf(x)=\E[f(x+Z^{\ve}_t)]$. As we proved before, Novikov's condition is verified, and the Cameron-Martin-Girsanov formula allows writing
 \[
P^{Z^\ve}_t(f)(x)=\E[e^{\ve^{\gamma-1} \int_0^t b(x+ W_s)\text{d}W_s-\frac{\ve^{2(\gamma-1)}}2\int_0^t|b(x+ W_s)|^2\text{d}s}f(x+ W_t)].
\]
The next thing to do is to eliminate the stochastic integral in the above expectation. Using  Itô's Formula and that $b(x)=\nabla U(x)$ we have
\[
U(x+ W_t)-U(x)=\int_0^t b(x+ W_s)\text{d}W_s+\frac{1}2\int_0^t{\rm{div}}\,b(x+ W_s)\text{d}s,
\]
and, since $U(0)=0$, we obtain
\begin{align*}
P^{Z^\ve}_tf(x)&= \E[f(x+Z^\ve_t)]\\
& =e^{-(\ve^{\gamma-1})U(x)}
\E[e^{-\frac{1}2\int_0^t((\ve^{(\gamma-1)}|b(x+W_s)|)^2+\ve^{\gamma-1} {\rm{ div }}\,b(x+ W_s))\text{d}s}e^{(\ve^{\gamma-1})U(x+ W_t)}f(x+ W_t)].
\end{align*}
Our $Z^\ve$ starts from zero, then our interest is in
\begin{align*}
\E[f(X^\ve_t)]& =\E[f(\ve Z^\ve_t)]=P^{Z^\ve}_tf_\ve(0)\\
& =\E[e^{-\frac{1}2\int_0^t((\ve^{(\gamma-1)}|b(W_s)|)^2+\ve^{\gamma-1} {\rm{ div }}\,b(W_s))\text{d}s}e^{(\ve^{\gamma-1})U(W_t)}f(\ve W_t)],
\end{align*}
where $f_\ve(x):=f(\ve x)$. Thus
\begin{align*}
\E[f(X^\ve_t)] &= \int_{\R^d}f(\ve y)p_t(y)e^{(\ve^{(\gamma-1)})U(y)}\E[e^{-\frac{1}2\int_0^t((\ve^{(\gamma-1)}|b(W_s|)^2+\ve^{\gamma-1} {\rm{ div }}\,b(W_s))\text{d}s} |W_t=y]\text{d}y\\
&=\frac1{\ve^d}\int_{\R^d}f(y)p_t(\frac{y}{\ve})e^{(\ve^{(\gamma-1)})U(\frac y{\ve})}\E[e^{-\frac{1}2\int_0^t((\ve^{(\gamma-1)}|b(W_s)|)^2+\ve^{\gamma-1} {\rm{ div }}\,b(W_s))\text{d}s}|W_t=\frac{y}\ve]\text{d}y,
\end{align*}
where $p_t(y)$ is the density function of the Gaussian random variable $W_t$ for fixed $t$. Let's simplify this expression. In the first place, we have 
$
e^{(\ve^{(\gamma-1)})U(\frac y\ve)}=e^{\frac{U(y)}{\ve^2}}
$.
In the second place, by using the invariance of the scale of the Brownian motion, we have $\ve^{\frac12}_{\gamma}W_{\frac{\cdot}{\ve_\gamma}}\stackrel{d}=W_{\cdot}.$ Then,

\begin{multline*}
\E\left[e^{-\frac{1}2\int_0^t((\ve^{(\gamma-1)}|b(W_s)|)^2+\ve^{\gamma-1} {\rm{ div }}\,b(W_s))\text{d}s}|W_t=\frac{y}\ve\right]\\
=\E\left[ e^{-\frac{1}2\int_0^t((\ve^{(\gamma-1)}(\ve_\gamma)^{\frac\gamma2}|b(W_{\frac{s}{\ve_{\gamma}}})|)^2+\ve^{\gamma-1}\ve^{\frac{\gamma-1}2}_{\gamma} {\rm{ div }}\,b(W_{\frac{s}{\ve_{\gamma}}})\text{d}s}|W_{\frac{t}{\ve_{\gamma}}}=\frac{y}{\ve_{\gamma}^{\frac12}\ve}\right]\\
= \E\left[ e^{{ \displaystyle-\frac{1}2\int_0^t\left(|b(W_{\frac{s}{\ve_{\gamma}}})|^2+{\rm{ div }}\,b(W_{\frac{s}{\ve_{\gamma}}})\right)\frac{\text{d}s}{\ve_\gamma}}}\Big|W_{\frac{t}{\ve_{\gamma}}}=\frac{y}{\ve_{\gamma}^{\frac12}\ve}\right],
\end{multline*}
where $\ve_\gamma= \ve^{2\frac{1-\gamma}{1+\gamma}}$.
For obtaining the last equality we have used that $b(\xi x)=\xi^{\gamma}b(x)$ and also ${\rm{div}}\, b(\xi x)=\xi^{\gamma-1}{\rm{div}}\, b(x)$ if $\xi>0$.  Now, we can make the change of variable $s=s\ve_\gamma$ into the integral in the exponential; then the above expression is equal to
\[
\E\left[e^{-\frac{1}2\int_0^{\frac{t}{\ve_\gamma}}\left(|b(W_s)|^2+{\rm{ div }}\,b(W_s)\right)\text{d}s}| W_{
\frac{t}{\ve_{\gamma}}}=\frac{y}{\ve_{\gamma}^{\frac12}\ve}\right].
\]
Finally, we can write 
\begin{align*}
\E[f(X^\ve_t)] & = \int_{\R^d}f(y)p^\ve(t,y)
\text{d}y\\
& =\frac1{\ve^d}\int_{\R^d}f(y)p_t(\frac{y}{\ve})e^{\frac{U(y)}{\ve^2}}\E\left[e^{-\frac{1}2\int_0^{\frac{t}{\ve_\gamma}}\left(|b(W_s)|^2+{\rm{ div }}\,b(W_s)\right)\text{d}s}\Big|W_{\frac{t}{\ve_{\gamma}}}=\frac{y}{\ve_{\gamma}^{\frac12}\ve}\right] \text{d}y.
\end{align*}
Hence, if define $V(x)=\frac{1}{2}\left(|b(x)|^2+ {\rm div}\, b(x)\right)$ and noting that $U\left(\frac{y}{\ve \ve_\gamma^{\frac{1}{2}}}\right)=\frac{U(y)}{\ve^2}$, we have the desired result:
\[
p^\ve(t,y)=\frac1{\ve^d}p_t(\frac{y}{\ve})e^{\frac{U(y)}{\ve^2}}\E\left[e^{-\frac{1}2\int_0^{\frac{t}{\ve_\gamma}}V(W_s)\text{d}s}\Big|W_{\frac{t}{\ve_{\gamma}}}=\frac{y}{\ve_{\gamma}^{\frac12}\ve}\right].
\]

Now, consider the semigroup
\[ T_t(f)(x)=\E\left[f(W_t)e^{-\int_0^tV(W_s)\text{d}s} \big| W_0=x\right],\] whose infinitesimal generator is the Schrödinger operator $-\mathcal L$. This is an unbounded self-adjoint operator acting in a dense subspace of $L^2(\R^d)$ and as in our case $V(x)\to\infty$ when $|x|\to\infty$ we know by \cite{Carmona78}, \cite{Carmona79} and \cite{Carmona81} that it has a discrete spectrum with eigenvalues $-\infty < \lambda_1 < \lambda_2 \leq \lambda_ 3\leq \dots$ and respectively normalized eigenfunctions $\{\psi_i\}_{i=1}^\infty$ (i.e. $-\mathcal{L}(\psi_i)=\lambda_i \psi_i$ with $\left\Vert \psi_i \right\Vert_{L^2(\R^d)}=1$ for all $i\geq 1$). By the Mercer theorem we get that the integral kernel\footnote{i.e. $a(t,x)$ is such that $ T_t(f)(x)= \int_{R^d} f(y) a_t(x,y)\text{d}y$} $a_t(t,y)$ of $T_t$, 
\[
a_t(x,y)= \frac{1}{(2\pi t)^\frac{d}{2}} e^{-\frac{|x-y|^2}{2t}} \E \left[ e^{-\frac{1}{2} \int_0^t V(W_s)\text{d}s} \big| W_0=x, \, W_t=y \right],
\]
can be written as 
$
a_t(x,y)= \sum_{j=1}^{\infty} e^{-\lambda_j t} \psi_j(x)\psi_j(y). 
$
Then, the density of $X_t^\ve$ can be written as
\begin{align*} 
p^\ve(t,x) & =\frac{1}{\ve^d \ve_{\gamma}^{\frac{d}{2}}} e^{U \left(\xe\right)} a_{\frac{t}{\ve_\gamma}}\left(0,\xe\right)
  = \frac{1}{\ve^d \ve_\gamma^{\frac{d}{2}}} e^{U\left( \xe\right)} \sum_{j=1}^{\infty} e^{-\lambda_j \frac{t}{\ve_\gamma}}\psi_j(0) \psi_j\left(\xe\right). 
\end{align*}
\end{proof}

Moreover, the first eigenvalue $\lambda_1$ in the representation \eqref{eq: densidad Mercer} is positive.

\begin{lemma}
Let $\lambda_1$ be the first eigenvalue corresponding to the Schrödinger operator $-\mathcal{L}(f)=-\frac{1}{2} \Delta f + V.f$, then $\lambda_1 >0$.
\end{lemma}
\begin{proof}
Let $\psi\in Dom\left(-\mathcal{L}\right)$. Note that
\begin{align*}
\left\langle \psi, -\mathcal{L}(\psi)\right\rangle_{L^2(\R^d)} & = \int \psi(x) \left( -\frac{1}{2}\Delta \psi(x) + V(x) \psi(x) \right)\text{d}x \\
& =\frac{1}{2} \left(\int \left\vert \nabla \psi(x)\right\vert^2 \text{d}x + \int \left(\left\vert \nabla U(x)\right\vert^2 + \Delta U(x)\right)\left(\psi(x)\right)^2 \text{d}x\right)\\
& = \frac{1}{2} \int \left\vert \nabla \psi(x)-\psi(x)\nabla U(x)\right\vert^2 \text{d}x \geq 0,
\end{align*}
then it must be $\lambda_1 \geq 0$. Let's see that $\lambda_1$ cannot be $ 0$. Suppose that there exists $\psi_1 \in Dom\left(-\mathcal{L}\right)$ such that $-\mathcal{L}(\psi_1)=0$, then
\[
0=\left\langle \psi_1, -\mathcal{L}(\psi_1)\right\rangle_{L^2(\R^d)}=\frac{1}{2}\int \left\vert \nabla \psi_1(x)-\psi_1(x)\nabla U(x)\right\vert^2 \text{d}x,
\]
and it must be $\nabla \psi_1(x)-\psi_1(x)\nabla U(x)=0$ for a.e. $x$. Then $\nabla \left(\log \psi_1\right)(x)=\nabla U(x)$, and there exits $C\in \R$ such that $\psi_1(x)=ce^{U(x)}$ for a.e. $x$, but this is not possible since $e^{U} \notin Dom\left(-\mathcal{L}\right)$.
\end{proof}

In the following, we use this representation of the density to study its exponential behavior when $\ve$ tends to zero. We first prove that the only term that matters in Equation \eqref{eq: densidad Mercer} corresponds to the first eigenvector $\psi_1$ (the ground state of the Schrödinger operator). Moreover, it is pointed out in \cite{Carmona79} that a simple consequence of Feynman-Kac's formula is that $\psi_1$ can be chosen everywhere positive and locally bounded away from zero. Therefore, the logarithm appearing in the following limit is well-defined.

\begin{lemma}\label{prop:p converges to expU psi1} Let $\lambda_1 > 0$ and $\psi_1$ be the first eigenvalue and the ground state of the Schrödinger operator $-\mathcal{L}(f)(x)=-\frac{1}{2}\Delta f(x)+ V(x)f(x)$, then
\begin{gather*}
\lim_{\ve\to0}\ve_\gamma\log(p^\ve(t,x)) =
-\lambda_1 t +
\lim_{\ve\to0}\ve_\gamma\log \left[ e^{U\left( \xe\right) }\psi_1\left(\xe\right)\right].
\end{gather*}
\end{lemma}

\begin{proof}
From Equation \eqref{eq: densidad Mercer}, we have
\begin{align*}
\lim_{\ve\to 0}\ve_\gamma\log(p^\ve(t,x)) 
&=\lim_{\ve\to0}\ve_\gamma\log \left[\sum_{j=1}^\infty e^{-\lambda_j\frac{t}{\ve_\gamma}} e^{U\left( \xe\right)}\psi_j(0)\psi_j\left(\xe\right)\right]\\
&= \lim_{\ve\to0}\ve_\gamma\log \left[e^{-\lambda_1 \frac{t}{\ve_\gamma}}\sum_{j=1}^\infty e^{-(\lambda_j-\lambda_1)\frac{t}{\ve_\gamma}} e^{U\left( \xe\right)}\psi_j(0)\psi_j\left(\xe\right)\right]\\
&= -\lambda_1 t + \lim_{\ve\to0}\ve_\gamma\log \Big[ e^{U\left( \xe\right)}\psi_1(0)\psi_1\left(\xe\right)  + S_\ve(x) \Big],
\end{align*}
being ${\displaystyle S_\ve(x)=\sum_{j=2}^\infty e^{-(\lambda_j-\lambda_1)\frac{t}{\ve_\gamma}} e^{U\left( \xe\right)}\psi_j(0)\psi_j\left(\xe\right)}$. It suffices to prove that $S_\ve(x)\to 0$  as $\ve \to 0$.
We will see in Lemma \ref{lemma:uniformly bounded exp(u)psij} 
that there exist constants $C,p, r>0$ such that $e^{U(x)} \left\vert \psi_j(x) \right\vert \leq C \left(1+\lambda_j\right)^{\frac{p}{2}}$ uniformly in $j$, if $\left \vert x \right\vert \geq r$. Moreover, as $0<\lambda_1 < \lambda_2 \leq \lambda_3 \leq \dots$ with $\lambda_j \to \infty$ if $j \to \infty$, there exists $k_0$ such that $\lambda_{k_0} > \lambda_1 + \lambda_2$. Then,

\begin{align*}
\left\vert S_\ve (x) \right\vert & \leq \sum_{j=2}^{k_0 -1 } e^{-(\lambda_j-\lambda_1)\frac{t}{\ve_\gamma}} e^{U\left( \xe\right)}\left\vert \psi_j(0)\right\vert \left\vert \psi_j\left(\xe\right)\right\vert \\
& \quad + e^{-\lambda_2 \frac{t}{\ve_\gamma}} \sum_{j=k_0}^{\infty } e^{-(\lambda_j-\lambda_1-\lambda_2)\frac{t}{\ve_\gamma}} e^{U\left( \xe\right)}\left\vert \psi_j(0)\right\vert \left\vert \psi_j\left(\xe\right)\right\vert \\
& \leq k_0 C e^{-(\lambda_2 - \lambda_1) \frac{t}{\ve_\gamma}} \left(1 + \lambda_{k_0 -1}\right)^{\frac{p}{2}} \sup_{2 \leq j \leq k_0 -1} \left\vert \psi_j(0)\right\vert \\
& \quad + C e^{-\lambda_2 \frac{t}{\ve_\gamma}} \sum_{j=k_0}^{\infty } e^{-(\lambda_j-\lambda_1-\lambda_2)\frac{t}{\ve_\gamma}}  \left(1+\lambda_j\right)^{\frac{p}{2}}\left\vert\psi_j(0)\right\vert  \\
& \to 0 \text{ if } \ve \to 0,
\end{align*}
since the series is convergent ($\left\vert\psi_j(0)\right\vert$ also grows at most polynomially with $\lambda_j$).
\end{proof}

In the following subsections, we use a refinement of the techniques proposed by Carmona-Simon to prove that:
\begin{enumerate}
\item There exist positive constants $C,p$ such that $e^{U(x)} \left\vert \psi_j(x) \right\vert \leq C \left(1+\lambda_j\right)^{\frac{p}{2}}$ when $\left\vert x \right\vert \to \infty$ (which we used in the previous proof), see Lemma \ref{lemma:uniformly bounded exp(u)psij}.
\item The following limit exists
\[
\lim_{\ve\to0}\ve_\gamma\log \left[ e^{U\left( \xe\right)}\psi_1\left(\xe\right)\right]= -g(x),
\]
being $g:\R^d \to \R$ such that $g(0)=0$, and $\left\langle \nabla U(x), \nabla g(x)\right\rangle=-\lambda_1$ (the uniqueness of $g$ will be deduced in subsection \ref{subsection: About the existence of g}). In lemmas \ref{proposition: upper bound for p} and \ref{proposition:lower bound for p}, we prove respectively the upper and lower bound for the above limit.
\end{enumerate}

\subsection{Decomposition of the potential}\label{subsection:Decomposition of the potential}
In this subsection, we prove that the potential $V$ is in the context of Carmona-Simon work.
\bigskip

\begin{lemma} \label{lemma:Decomposition of the potential}
The potential 
$
V(x)=\frac{1}{2}\left(|b(x)|^2 + {\rm{div}}\,b(x)\right)=\frac{1}{2}\left(|\nabla U(x)|^2+\Delta U(x)\right),
$
can be decomposed into $ V=V_1-V_2$ such that $V_1$ is bounded below and $V_1 \in L^1_{loc}(\R^d)$ on one hand, and $V_2 \geq 0$ and $V_2\in L^p(\R^d)$ for a certain $p>\frac d2$ on the other.
\end{lemma}

\begin{proof} Since $U$ is a homogeneous function of degree $\gamma+ 1$, then $\nabla U$ is homogeneous of degree $\gamma$ and $\Delta U$ is homogeneous of degree $\gamma - 1$. Therefore, there exist functions $\theta_1: \mathbb{S}^{d-1}\rightarrow \R^d$ and $\theta_2: \mathbb{S}^{d-1} \rightarrow \R$ such that:
\[ 
V(x)=\frac12\left( |\theta_1(\frac x{|x|})|^2 |x|^{2\gamma}+\theta_2(\frac x{|x|})|x|^{\gamma-1} \right).
\]
First, we decompose $\theta_2$ as $\theta_2= \theta_{2+}-\theta_{2-}$. Let  $z$ be a real positive to be chosen later, and let define $V_2(x)=\frac{1}{2} \theta_{2-}(\frac{x}{\left\vert x \right\vert}) |x|^{\gamma-1} \mathbf{1}_{\{ |x|\leq z \}}$ and $V_1(x)=V(x)+V_2(x)$. Then,
\[
V_1(x) = \frac12\left[ |\theta_1(\frac x{|x|})|^2|x|^{2\gamma}\mathbf 1_{\{|x|\leq z\}}+\theta_{2+}(\frac x{|x|})|x|^{\gamma-1}
  |x|^{2\gamma} \left(|\theta_1(\frac x{|x|})|^2 -\theta_{2-}(\frac x{|x|})|x|^{-\gamma-1}\right)\mathbf 1_{\{|x|>z\}}\right]
\]
The only term in the above sum that can be negative  is the last one, but we have when $|x|>z$
$$
|\theta_1(\frac x{|x|})|^2 -\theta_{2-}(\frac x{|x|})|x|^{-\gamma-1} \ge |\theta_1(\frac x{|x|})|^2 -
\underset{y\in \mathbb{S}^{d-1}}{\sup} |\theta_{2-}(y)|(\frac1z)^{1+\gamma}\ge0,
$$ 
wherever we take $z$ such that
  $z^{1+\gamma}\ge\frac{\underset{y\in \mathbb{S}^{d-1}}{\sup} |\theta_{2-}(y)|}{\left\vert\theta_1(\frac{x}{|x|})\right\vert^2}$ (we will prove in a moment that $\left\vert\theta_1(y)\right\vert^2>0$ for all $y\in \mathbb{S}^{d-1}$). But we have that this inequality holds if
  $$z=\left(\frac{\underset{y\in \mathbb{S}^{d-1}}{\sup} |\theta_{2-}(y)|}{\underset{y\in \mathbb{S}^{d-1}}{\inf}\left\vert\theta_1(y)\right\vert^2}\right)^{\frac1{1+\gamma}},$$
which we will take in what follows as the limit of validity of our result.  
Note that $z=z(\theta)$. In this form, we have $V_1\ge0$.
For the other term in the decomposition, we have
  $$\int_{\R^d}|V_2(x)|^p\text{d}x\le \frac{1}{2^p}\left\Vert \theta_{2}\right\Vert_\infty^p\sigma_d(\mathbb{S}^{d-1})\int_0^{z}\frac1{r^{p(1-\gamma)-d+1}}\text{d}r.$$ 
This last integral is convergent whenever $p(1-\gamma)-d<0$, thus when $p<\frac{d}{1-\gamma}$. Since for $0<\gamma<1$ we have $1-\gamma<2$ we get $\frac d2<\frac d{1-\gamma}$,  implying that we can always chose a $p$ such that $\frac d2<p<\frac d{1-\gamma}$. We will choose one of these exponents and remark that, in fact, $p:=p(\gamma)$.
\end{proof}   

\begin{lemma}
If define $\theta_1$ such that $\nabla U(x)=\theta_1(\frac{x}{|x|})\left\vert x \right\vert^{\gamma},$ then $\left\vert \theta_1(y)\right\vert^2 >0$ for all $y \in \mathbb{S}^{d-1}$.
\end{lemma}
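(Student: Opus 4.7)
The plan is to exploit the orthogonal decomposition of $\theta_1(y)$ into its radial and tangential components relative to the sphere $\mathbb{S}^{d-1}$, and then observe that the radial component alone is already bounded away from zero by the positivity hypothesis on $\theta$.

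First, I would read off the formula for $\theta_1$ from the expression for $\nabla U(x)$ given in the text:
\[
\theta_1(y) \;=\; \nabla\theta(y) + \bigl((1+\gamma)\theta(y) - \langle \nabla\theta(y), y\rangle\bigr)\, y,\qquad y\in\mathbb{S}^{d-1}.
\]
Next, I would write $\nabla\theta(y) = \langle \nabla\theta(y), y\rangle\, y + \nabla_T\theta(y)$, where $\nabla_T\theta(y) := \nabla\theta(y) - \langle \nabla\theta(y), y\rangle\, y$ is the component of $\nabla\theta(y)$ tangent to the sphere at $y$. Substituting this into the expression for $\theta_1$, the terms $\pm\langle \nabla\theta(y), y\rangle\, y$ cancel and one obtains
\[
\theta_1(y) \;=\; (1+\gamma)\theta(y)\, y \;+\; \nabla_T\theta(y),
\]
which is precisely the radial/tangential decomposition already announced in the paragraph preceding Condition~\ref{Condition1 for b} in the excerpt.

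Since $y\perp \nabla_T\theta(y)$ by construction, Pythagoras yields
\[
|\theta_1(y)|^2 \;=\; (1+\gamma)^2\,\theta(y)^2 \;+\; |\nabla_T\theta(y)|^2 \;\ge\; (1+\gamma)^2\,\theta(y)^2.
\]
By hypothesis, $\theta:\mathbb{S}^{d-1}\to\R$ is strictly positive, and $(1+\gamma)^2>0$, so the right-hand side is strictly positive for every $y\in\mathbb{S}^{d-1}$. This gives $|\theta_1(y)|^2>0$ for all $y\in\mathbb{S}^{d-1}$, as required. There is no real obstacle here; the only subtlety is noticing that the tangential part of $\nabla\theta$ cannot cancel the radial contribution $(1+\gamma)\theta(y)\,y$ because the two components are orthogonal, so the strict positivity of $\theta$ alone suffices. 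In particular, continuity of $\theta$ on the compact sphere also gives a uniform lower bound $\inf_{y}|\theta_1(y)|^2 \ge (1+\gamma)^2\inf_y\theta(y)^2>0$, which is exactly what is invoked when choosing the radius $z$ in the preceding proof.
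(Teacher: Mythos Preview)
Your proof is correct and follows essentially the same approach as the paper: both decompose $\theta_1(y)$ into the orthogonal radial part $(1+\gamma)\theta(y)y$ and the tangential part of $\nabla\theta(y)$, then use Pythagoras and the strict positivity of $\theta$. The only cosmetic difference is that the paper writes the squared tangential component as $|\nabla\theta(y)|^2\sin^2(\alpha)$ (with $\alpha$ the angle between $\nabla\theta(y)$ and $y$) instead of your $|\nabla_T\theta(y)|^2$, which is the same quantity.
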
   
\begin{proof} Since
\[
\frac{\partial}{\partial x_i}U(x) = \left\vert x \right\vert ^{\gamma} \left[ -\left\langle \nabla \theta(\frac{x}{|x|}), \frac{x}{|x|} \right\rangle \frac{x_i}{|x|}+ \left(\frac{\partial}{\partial x_i}\theta\right)(\frac{x}{|x|}) + \theta(\frac{x}{|x|}) (1+\gamma) \frac{x_i}{|x|}\right],
\]
then $\theta_1(y)=\theta(y)(1+\gamma)y+\nabla \theta(y)-\left\langle \nabla \theta (y), y\right\rangle y$ and
\[
\left\vert \theta_1(y)\right\vert^2= (\theta(y))^2 (1+\gamma)^2 + \left\vert \nabla \theta (y)\right\vert^2 \sin^2(\alpha)>0
\]
for all $y\in \mathbb{S}^{d-1}$, where $\alpha$ is the angle formed by $\nabla \theta(y)$ and $y$.
\end{proof}

\begin{remark}
Moreover, note that $\left\vert \theta_1(y) \right\vert ^2> (\theta(y))^2>0$. This remark will be used later.
\end{remark}

\subsection{Upper bound for the density $p^\ve(t,x)$}\label{subsection:Upper bound for the density p}
In this subsection, we use Carmona-Simon techniques to get an upper bound for the eigenvectors $\psi_j$ (particularly for the ground state $\psi_1$) when $\left\vert x \right\vert \to \infty$. We do this because we must refine the Carmona-Simon bounds for our particular case. If we use its bounds ($\psi_1(x) \leq D(\delta)e^{-\delta |x|^{\gamma + 1}}$, see \cite{Carmona78}), then the limit $\underset{\ve \to 0}{\lim} \ve_\gamma \log \left[ e^{U(\xe)} \psi_1(\xe) \right]$ is bounded by an expression that explodes.
\bigskip

If $\psi_j$ is a normalized eigenvector of the  Schrödinger operator $-\mathcal{L}$ with eigenvalue $\lambda_j$, then
$$T_t(\psi_j)=e^{-\lambda_j t} \psi_j, \text{ being } T_t(f)(x)=\E_x \left[ f(W_t) e^{-\int_0^t V(W_s)\text{d}s} \right],$$
with  $V(x)=V_1(x)-V_2(x)$ decomposed as in the previous section. Then,
\begin{align*}
\left\vert \psi_j(x)\right\vert^2 & = e^{2\lambda_j t} \left( \E_x\left[ \psi_j(W_t) e^{-\int_0^t V(W_s)\text{d}s}\right]\right)^2\\
& \leq \left\Vert \psi_j \right\Vert_{\infty}^2 e^{2\lambda_j t}  \E_x\left[ e^{-2\int_0^t V_1(W_s)\text{d}s}\right] \E_x\left[ e^{2\int_0^t V_2(W_s)\text{d}s}\right]
\end{align*}
Let $a>0$ be a parameter to be determined later and $V_1^a(x)=\underset{y\in \overline{B(x,a)}}{\inf}V_1(y)$. Moreover, let be $\hat{V}_1(x)=2V_1(x)=\left[ \left\vert \nabla U(x)\right\vert^2 + \left( \theta_{2+}(\frac{x}{|x|})-\theta_{2-}(\frac{x}{|x|}) {\bf 1}_{\{|x|>z\}} \right)\left\vert x\right\vert^{\gamma-1}\right]$ and $\hat{V}_2(x)=2V_2(x)=\theta_{2-}(\frac{x}{|x|}){\bf 1}_{\{ |x|\leq z\}}\left\vert x\right\vert^{\gamma - 1}$ (analogously we define $\hat{V}_1^a$). Then,
\begin{align*}
\E_x\left[ e^{-2\int_0^t V_1(W_s)\text{d}s}\right] & = \E_x\left[ e^{-\int_0^t\hat{V}_1(W_s)\text{d}s}
{\bf 1}_{\{\underset{s\leq t}{\sup}\left\vert W_s-x\right\vert\leq a\}}\right] +  \E_x\left[ e^{-\int_0^t\hat{V}_1(W_s)\text{d}s}{\bf 1}_{\{
\underset{s\leq t}{\sup}\left\vert W_s-x\right\vert >  a\}}\right]\\
& \leq e^{-t\hat{V}_1^a(x)}+ \PP_x\left(\underset{s\leq t}{\sup}\left\vert W_s-x\right\vert > a\right),
\end{align*}
where
\[
\PP_x\left(\underset{s\leq t}{\sup}\left\vert W_s-x\right\vert > a\right) \leq 2d \frac{1}{(2\pi)^{\frac{d}{2}}} \int_{\frac{a}{\sqrt{t}}}^{+\infty} r^{d-1}e^{-\frac{r^2}{2}}dr  \leq c_d \left[ \left(\frac{a}{\sqrt{t}}\right)^{\frac{d}{2}}+1 \right]e^{-\frac{a^2}{2t}},
\]
and $c_d>0$ only depends on $d$. On the other hand, using Equation {\bf 2.2} from \cite{Carmona78} for $r=1$, we have that for all $t\geq 0$,
\[
\E_x\left[ e^{2\int_0^t V_2(W_s)ds}\right] \leq \underset{x\in \R^d}{\sup} \E_x\left[ e^{\int_0^t \hat{V}_2(W_s)ds}\right] \leq K_{\eta}e^{c(p)^{\frac{1}{\eta}} \left\Vert \hat{V}_2 \right\Vert_{p}^{\frac{1}{\eta}}t},
\]
being $\eta=1-\frac{d}{2p}$, $c(p)=\frac{1}{(2\pi)^{\frac{d}{2p}}}\left(1-\frac{1}{p}\right)^{\left(1-\frac{1}{p}\right)^{\frac{d}{2}}}$, and $K_\eta$ some positive constant that depends only on $\eta$. Finally,
\begin{align}\label{eq:cota psij}
\left\vert \psi_j(x)\right\vert^2 & \leq \left\Vert \psi_j \right\Vert_{\infty}^2 e^{2\lambda_j t} K_{\eta}e^{c(p)^{\frac{1}{\eta}} \left\Vert \hat{V}_2 \right\Vert_{p}^{\frac{1}{\eta}}t} \times \left[e^{-t\hat{V}_1^a(x)}+  c_d \left[ \left(\frac{a}{\sqrt{t}}\right)^{\frac{d}{2}}+1 \right]e^{-\frac{a^2}{2t}}\right]\\
&= K_\eta \left\Vert \psi_j \right\Vert_{\infty}^2 e^{\left( 2\lambda_j +c(p)^{\frac{1}{\eta}} \left\Vert \hat{V}_2 \right\Vert_{p}^{\frac{1}{\eta}}\right)t}  \left[e^{-t\hat{V}_1^a(x)}+  c_d \left[ \left(\frac{a}{\sqrt{t}}\right)^{\frac{d}{2}}+1 \right]e^{-\frac{a^2}{2t}}\right],
\end{align}
for all $t>0$ and $a>0$. 
\bigskip

Now we want to choose $t>0$ and $a>0$ appropriately so that we can prove the bound for each $\left\vert e^{U(x)}\psi_j(x)\right\vert$ and also get an upper bound for the limit 
\[ \lim_{\ve\to0}\ve_\gamma\log \Big[ \exp\left(U\left( \xe\right)\right)\psi_1\left(\xe\right)\Big].
\]
Those results are presented as lemmas \ref{lemma:uniformly bounded exp(u)psij} and \ref{proposition: upper bound for p}.
\bigskip

\begin{lemma}\label{lemma:uniformly bounded exp(u)psij} 
There exists constants $C,r>0$ such that $\left\vert e^{U(x)}\psi_j(x)\right\vert \leq C \left(1+\lambda_j\right)^{\frac{p}{2}}$, for each $j$ when $|x|>r$, being $p>\frac{d}{2}$ the constant defined in Lemma \ref{lemma:Decomposition of the potential}.
\end{lemma}
\begin{proof}
Let be $c_j=2\lambda_j +c(p)^{\frac{1}{\eta}} \left\Vert \hat{V}_2 \right\Vert_{p}^{\frac{1}{\eta}}$ and take $t=\frac{ma}{\sqrt{\hat{V}_1^a(x)}}$ in Equation \eqref{eq:cota psij} with $m>0$. Then, if $|x|\to \infty$,
\begin{align*}
\left\vert \psi_j (x)\right\vert^2 & \lesssim K_\eta \left\Vert \psi_j \right\Vert_{\infty}^2 \left[ e^{(c_j -\hat{V}_1^a(x))\frac{ma}{\sqrt{\hat{V}_1^a(x)}}} + c_d \left(\frac{a}{m}\sqrt{\hat{V}_1^a(x)}\right)^{\frac{d}{4}}e^{-\frac{\sqrt{\hat{V}_1^a(x)}a}{2m}}\right]\\
& \approx K_\eta \left\Vert \psi_j \right\Vert_{\infty}^2 \left[ e^{-ma\sqrt{\hat{V}_1^a (x)}} + c_d \left(\frac{a}{m}\sqrt{\hat{V}_1^a(x)}\right)^{\frac{d}{4}}e^{-\frac{\sqrt{\hat{V}_1^a(x)}a}{2m}}\right].
\end{align*}
Let be $\delta \in (0,1)$ and $r>0$ such that $\left(\frac{a}{m}\sqrt{\hat{V}_1^a(x)}\right)^{\frac{d}{4}}e^{-\frac{\sqrt{\hat{V}_1^a(x)}a}{2m}} < e^{-\delta \frac{\sqrt{\hat{V}_1^a(x)}a}{2m}}$ if $|x|\geq r$. Then,
\[
\left\vert \psi_j (x)\right\vert^2 
\lesssim K_\eta \left\Vert \psi_j \right\Vert_{\infty}^2 e^{-\inf\left\{ ma \sqrt{\hat{V}_1^a(x)},\frac{\delta a \sqrt{\hat{V}_1^a (x)}}{2m} \right\} },
\]
for all $m>0$ and $a>0$. Now, we have
\[
e^{U(x)}\left\vert \psi_j(x)\right\vert \lesssim \sqrt{K_\eta} \left\Vert \psi_j \right\Vert_{\infty} e^{U(x)-\inf\{\frac{m}{2}, \frac{\delta}{4m} \} a \sqrt{\hat{V}_1^a(x)}}.
\]
First, we choose $a>0$ and $m>0$ so that the exponent in the last equation is negative when $|x| \to \infty$. Then, we prove that there exists a positive constant $C$ independent of $j$ such that $\sqrt{K_\eta} \left\Vert \psi_j \right\Vert_{\infty} \leq C\left( 1+ \lambda_j \right)^{\frac{p}{2}}$.

For the first part, note that if $|x|\to \infty$, then $\hat{V}_1^a(x) \approx \left\vert \nabla U(x)\right\vert^2 = | \theta_1(\frac{x}{|x|})|^2 |x|^{2\gamma}$. Then, we take 
\[ 
a(x)=\max \{ \frac{2}{m}, \frac{4m}{\delta}\} \underset{y\in \mathbb{S}^{d-1}}{\sup}\frac{\theta(y)}{\left\vert \theta_1(y)\right\vert} \left\vert x \right\vert < \max \{ \frac{2}{m}, \frac{4m}{\delta}\} \left\vert x \right\vert,
\]
for a fixed $m>0$, and we get that effectively $U(x)-\inf\{\frac{m}{2}, \frac{\delta}{4m} \} a \sqrt{\hat{V}_1^a(x)} <0$ if $|x|$ is sufficiently large.

Finally, it is enough to prove that there exist a constant $C$ such that $\left\Vert \psi_j \right\Vert_{\infty} \leq C\left(1+\lambda_j\right)^{\frac{p}{2}}$ (with abuse of notation, we will then take $C=\sqrt{K_\eta}C$). Since $\psi_j$ verifies $-\Delta \psi_j = 2 \left(\lambda_j - V\right) \psi_j$, and $V(x)\to +\infty$ when $|x|\to \infty$, standard elliptic regularity gives that for every $k\in \N$ there exists a constant $C_k>0$ such that
\[
\left\Vert \psi_j \right\Vert_{H^{k+2}} \leq C_k \left(1+ \lambda_j\right) \left\Vert \psi_j \right\Vert_{H^k},
\]
where $\left\Vert .\right\Vert_{H^k}$ denotes the usual norm on the Sobolev space on $\R^d$, $H^k=\{ f\in L^2(\R^d): \partial^{\alpha}f \in L^2(\R^d) \; \forall |\alpha|\leq k \}$. Iterating, we have $\left\Vert \psi_j \right\Vert_{H^s} \leq C_s \left(1+ \lambda_j\right)^{\frac{s}{2}}$ for all $s=2k$. Moreover, if $s>\frac{d}{2}$ (that is why we take $s=p$ from Lemma \ref{lemma:Decomposition of the potential}), then Sobolev embedding gives $\left\Vert \psi_j \right\Vert_\infty \leq C \left\Vert \psi_j \right\Vert_{H^s}$, and combining with the previous bound, we get $\left\Vert \psi_j \right\Vert_{\infty} \leq C C_p \left(1+\lambda_j \right)^{\frac{p}{2}}=C\left(1+\lambda_j \right)^{\frac{p}{2}}$, with abuse of notation on $C$.
\end{proof}

Now, we are able to prove that if $g:\R^d\to \R$ is a homogeneous function of degree $1-\gamma$ verifying $\left\langle \nabla U(x), \nabla g(x)\right\rangle = -\lambda_1$, then we can get an upper bound for the limit $\underset{\ve \to 0}{\lim}\ve_\gamma\log(p^\ve(t,x))$ as a function of $g$. But, before presenting this result, we introduce some observations about the behavior of the ground state $\psi_1$ and present the heuristic that allowed us to arrive at the candidate function $g(x)$.

\begin{remark}
From Carmona-Simon work, we know that there exists a function $\rho$ such that\\
$\lim_{|x|\to\infty}-\frac{\log(\psi_1(x))}{\rho(x)}=1$. If $V(x)\ge0$, the function $\rho$ is the Agmon's distance 
\[ \rho(x)=\inf\big\{\int_0^1\sqrt{2V(\gamma(s))}|\dot\gamma(s))|ds:\,\gamma\in\mathcal C,\gamma:[0,1]\to\R^d, \gamma(0)=0,\gamma(1)=x\big\},\]
(see \cite{Agmon}). Since, in our case, $V$ need not be positive, we will approximate $\rho$ from the following heuristic. We conjecture that $\log\psi_1(x)\approx -\rho(x)\approx -U(x)-g(x),$ when $|x|\to\infty$, being $g$ a homogeneous function of degree $1-\gamma$ such that $\left\langle \nabla U(x), \nabla g(x)\right\rangle=-\lambda_1$, since if we define $\psi(x)=e^{-U(x)-g(x)}$, then
\[
-\mathcal{L}(\psi)(x) - \lambda_1 \psi(x)  = \psi(x)     \left[ -\lambda_1 - \left\langle \nabla U(x), \nabla g(x)\right\rangle + \Delta U(x)-\frac{1}{2}\left\vert \nabla g(x)\right\vert ^2 - \frac{1}{2} \Delta g(x)\right].
\]
Due to the homogeneity of $U$ and $g$, the term $\Delta U(x)-\frac{1}{2}\left\vert \nabla g(x)\right\vert ^2 - \frac{1}{2} \Delta g(x) \to 0$ if $|x|\to \infty$, and we choose $g$ such that $-\lambda_1 - \left\langle \nabla U(x), \nabla g(x)\right\rangle=0$. In the particular case where $\theta = \frac{1}{1+\gamma}$,  the solution is $g(x)= -\lambda_1 \frac{|x|^{1-\gamma}}{1-\gamma}$, and agrees with the results in \cite{Pappalettera}. 
\end{remark}
\bigskip

\begin{lemma}[Upper bound for the density]\label{proposition: upper bound for p}
Let $g:\R^d\to \R$ be a homogeneous function of degree $1-\gamma$ verifying $\left\langle \nabla U(x), \nabla g(x)\right\rangle = -\lambda_1$. Then,
\[
\lim_{\ve\to0}\ve_\gamma\log(p^\ve (t,x))\leq -\lambda_1 t -g(x), \, \forall x, \, \forall t.
\]
\end{lemma}

\begin{proof}
Due to the homogeneity of $g$, we can define a function $\theta_g: \mathbb{S}^{d-1}\to \R$ such that $g(x)=\theta_g(\frac{x}{|x|}) \left\vert x\right\vert^{1-\gamma}$. From the proof of Lemma \ref{lemma:uniformly bounded exp(u)psij} for $j=1$, we know that
\[
0< \psi_1(x) \lesssim \sqrt{K_\eta} \left\Vert \psi_1 \right\Vert_{\infty} e^{-\inf\{\frac{m}{2}, \frac{\delta}{4m} \} a \sqrt{\hat{V}_1^a(x)}}, \quad \forall m>0, \, \forall a>0.
\] 
Now, we want to choose $a=a(x)$ such that $\psi_1(x)\lesssim e^{-U(x)-g(x)}$ when $|x|\to \infty$. If $|x|\to \infty$, then 
$
\inf\{\frac{m}{2}, \frac{\delta}{4m} \} a \sqrt{\hat{V}_1^a(x)} \approx U(x)+g(x) 
$
if
\[ 
a(x) = \frac{1}{\inf \{\frac{m}{2}, \frac{\delta}{4m} \}} \left[ \frac{\ti}{\left\vert \theta_1(\frac{x}{|x|})\right\vert}  + \frac{ \theta_g(\frac{x}{|x|})}{\left\vert \theta_1(\frac{x}{|x|})\right\vert} |x|^{-2\gamma}\right]|x|.
\]
Then, with this choice of $a(x)$, we have that
\begin{align*}
\lim_{\ve\to0}\ve_\gamma\log(p^\ve (t,x))& =
-\lambda_1 t+\lim_{\ve\to0}\ve_\gamma\log \Big[ e^{U\left( \xe\right)}\psi_1\left(\xe\right)\Big] \\
& \leq -\lambda_1 t + \lim_{\ve\to0}\ve_\gamma\log \Big[ e^{U\left( \xe\right)} \sqrt{K_\eta} \left\Vert \psi_1 \right\Vert_{\infty} e^{-U\left( \xe\right)-g\left( \xe\right)}\Big]\\
& =  -\lambda_1 t + \lim_{\ve\to0}\ve_\gamma g\left( \xe\right)\\
&= -\lambda_1 t -g(x),
\end{align*}
due to the homogeneity of $g$.
\end{proof}

\subsection{Lower bound for the density $p^\ve(t,x)$}\label{Lower bound for the density p}

For the lower bound, we use Lemma {\bf 4.1} from \cite{Carmona78}, which is presented below.

\begin{lemma}[Lemma {\bf 4.1} from \cite{Carmona78}]\label{lemma:lower bound for psi1}
For each $x\in \R^d$, $x\neq 0$, and for each positive real numbers $\alpha_j, b_j, a_j, $ and $t$ such that
$a_j^2 >t$, $\alpha_j < \frac{a_j}{2}$, $l\left( [-a_j, a_j] \cap [-x_j-b_j, -x_j+b_j]\right)>\alpha_j,$ where $l({\bf I})$ denotes the length of the interval ${\bf I}$, the following lower bound for the ground state is verified
\begin{align*}
-\log \psi_1(x) & \leq -\lambda_1 t - \log\left(\eta(b)\right)
+ d\log \left(2t \sqrt{2\pi t} \right) -\sum_j \log \left(\alpha_j^2 a_j \right)\\
& \qquad + \frac{9}{8t} \sum_j a_j^2 + t \sup \{V_1(y): \, |y_j-x_j|<a_j \},  
\end{align*}
being $\eta(b):= \inf\{ \psi_1(y):\, |y_j|\leq b_j \, \forall j \}$.
\end{lemma}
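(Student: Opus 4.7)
The strategy is to exploit the Feynman--Kac representation of the ground state. Since $T_t\psi_1 = e^{-\lambda_1 t}\psi_1$, one has
\[
\psi_1(x) = e^{\lambda_1 t}\, \E_x\!\left[ \psi_1(W_t)\, e^{-\int_0^t V(W_s)\text{d}s} \right],
\]
and my plan is to lower-bound this expectation by restricting to a set of well-behaved Brownian paths. Specifically, I would take $A$ to be the event that $|W_s^j - x_j|\leq a_j$ for all $s\in[0,t]$ and all coordinates $j$, together with $|W_t^j|\leq b_j$ for every $j$. The length hypothesis on $[-a_j,a_j]\cap[-x_j-b_j,-x_j+b_j]$ guarantees that, after centering at $x$, the endpoint $W_t-x$ is forced to land in an interval of length at least $\alpha_j$ that is contained in $[-a_j,a_j]$. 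On $A$, the decomposition $V=V_1-V_2$ with $V_2\geq 0$ gives $V(W_s)\leq V_1(W_s)\leq \sup\{V_1(y):|y_j-x_j|<a_j\}$, so the Feynman--Kac exponential is at least $e^{-t\sup V_1}$, while $\psi_1(W_t)\geq \eta(b)$ by definition of $\eta(b)$. This yields
\[
\psi_1(x) \geq \eta(b)\, e^{\lambda_1 t - t\sup\{V_1(y):|y_j-x_j|<a_j\}}\, \PP_x(A).
\]

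The bulk of the work is then to prove the lower bound $\PP_x(A)\geq (2t\sqrt{2\pi t})^{-d}\prod_j \alpha_j^2 a_j\, e^{-\frac{9}{8t}\sum_j a_j^2}$. Because the Brownian coordinates are independent and the event $A$ factorizes across them, it suffices to establish a one-dimensional estimate of the form
\[
\PP\!\left(|B_s|\leq a\ \forall\, s\leq t,\ B_t\in I\right) \geq \frac{\alpha^2 a}{2t\sqrt{2\pi t}}\, e^{-\frac{9 a^2}{8t}},
\]
whenever $B$ is a standard Brownian motion, $I\subset[-a,a]$ is an interval of length $\alpha$, $a^2>t$, and $\alpha<a/2$. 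A natural route is to write this probability as the integral over $y\in I$ of the density of the Brownian motion killed on exit from $(-a,a)$, use the reflection-principle series together with the short-time assumption $a^2>t$ to isolate the leading Gaussian term, and then split the trajectory at an intermediate time to decouple the ``reach the target interval'' step from the ``stay in the box'' step; the extra factor $\alpha\,a/t$ and the constraint $\alpha<a/2$ come from fitting a second small window at that intermediate time.

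Taking the product over $j$ and then $-\log$ of the resulting lower bound on $\psi_1(x)$ produces exactly the statement of the lemma. The main obstacle I expect is the \emph{sharpness} of the one-dimensional killed Brownian motion estimate, both in the prefactor $\alpha^2 a$ and in the exponent constant $9/8$: a crude Gaussian comparison for the killed density is not tight enough, so one really needs the bridge decomposition above, arranged so that the two restrictions on the path are essentially independent. Once this estimate is in hand, the Feynman--Kac reduction and the final logarithmic bookkeeping are routine.
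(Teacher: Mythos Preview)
The paper does not supply its own proof of this lemma: it is quoted verbatim as Lemma~4.1 of Carmona~\cite{Carmona78} and used as a black box in the proof of Proposition~\ref{proposition:lower bound for p}. Your outline is correct and is precisely Carmona's original argument---restrict the Feynman--Kac expectation to the event that each coordinate of $W$ stays in $[x_j-a_j,x_j+a_j]$ up to time $t$ and lands in $[-b_j,b_j]$ at time~$t$, use $V\le V_1$ and $\psi_1(W_t)\ge\eta(b)$ on that event, factor over independent coordinates, and reduce to the one-dimensional estimate you wrote down.
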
 
\begin{proof}
See Lemma {\bf 4.1} from \cite{Carmona78}.
\end{proof}
\bigskip

\begin{lemma}[Lower bound for the density]\label{proposition:lower bound for p}
Let $g:\R^d\to \R$ be a function verifying $\left\langle \nabla U(x), \nabla g(x)\right\rangle = -\lambda_1$. Then,
\[
\lim_{\ve\to0}\ve_\gamma\log(p^\ve (t,x))\geq -\lambda_1 t -g(x), \, \forall x, \, \forall t.
\]
\end{lemma}

\begin{proof}
We need a lower bound for
\[
\ve_\gamma \left[ U\left(\xe \right) + \log \left(\psi_1\left(\xe\right)\right) \right].
\]
Since we do not have a favorite direction, we apply Lemma \ref{lemma:lower bound for psi1} to the positive parameters $a$, $b$, $\alpha$, and $t$, which will be chosen appropriately to get the lower bound as a function of $g$. By Lemma \ref{lemma:lower bound for psi1}, we have
\begin{align*}
\ve_\gamma \left[ U(\xe) + \log(\psi_1(\xe)) \right] & \geq 
\ve_\gamma \Big[ U(\xe) - \sup\left\{ V_1(y):\, \left\vert y_j-\frac{x_j}{\ve \ve_\gamma^{\frac{1}{2}}}\right\vert <a \, \forall j \right\} t \\
& \quad + \lambda_1t  -d\log(2t\sqrt{2\pi t})\\
& \quad + \log(\eta(b)) + d \left(\log(\alpha^2 a)-\frac{9}{8t}a^2\right) \Big]\\
& := L_{\ve,x}.
\end{align*}
Then, taking the following parameters $\alpha=b>0$ constants, $t=t_{\ve,x}= \ve_\gamma^{-1} \frac{U(x)}{\left\vert \nabla U(x)\right\vert^2}$, 
and $a=a_{t,x}$ such that
$
a_{t,x}^2= \frac{8}{9d}\ve_{\gamma}^{-1}t_{\ve,x}\left[g(x)+ \lambda_1 \frac{U(x)}{\left\vert \nabla U(x) \right\vert^2}+ \frac{1}{2}\frac{\ve_\gamma}{\ve^2} U(x)\right], 
$
we obtain that $\underset{\ve \to 0}{\lim} L_{\ve,x}=-\underset{\ve \to 0}{\lim} \ve_\gamma  g(\xe)=-g(x)$, and the proof is concluded since
\begin{align*}
\lim_{\ve\to0}\ve_\gamma\log(p^\ve (t,x))& =
-\lambda_1 t+\lim_{\ve\to0}\ve_\gamma\log \Big[ e^{U\left( \xe\right)}\psi_1\left(\xe\right)\Big] \\
& \geq -\lambda_1 t + \lim_{\ve\to0} L_{\ve,x}\\
&= -\lambda_1 t -g(x).
\end{align*}
\end{proof}

\subsection{About the existence of a homogeneous function $g$ such that $\left\langle \nabla U(x), \nabla g(x)\right\rangle = -\lambda_1$.}\label{subsection: About the existence of g}

From the upper and lower bounds for the density (see lemmas \ref{proposition: upper bound for p} and \ref{proposition:lower bound for p}), we get that if there exists a homogeneous function of degree $1-\gamma$ such that $\left\langle \nabla U(x), \nabla g(x)\right\rangle = -\lambda_1$, then the limit 
\[
\lim_{\ve \to 0} \ve_\gamma \log \left(p^\ve (t,x)\right),
\]
exists and it is $-\lambda_1 t -g(x)$. Then, we can deduce that if such a function $g$ exists, it must be unique. In this subsection, we include some comments on the existence of a homogeneous function $g$ that verifies the equation $\left\langle \nabla U(x), \nabla g(x)\right\rangle=-\lambda_1$.
\bigskip

First, note that due to the homogeneity of $\nabla U(x)$, if there exists a homogeneous function $g$ verifying $\left\langle \nabla U(x), \nabla g(x)\right\rangle=-\lambda_1$, then it must be homogeneous of degree $1-\gamma$. Moreover, note that if $\varphi$ is solution for Equation \eqref{eq:ODE}, then
\[
\frac{\partial}{\partial t}\left(g(\varphi(t))\right)= \left\langle \nabla g(\varphi(t)), \dot{\varphi}(t)\right\rangle =
\left\langle \nabla g(\varphi(t)), \nabla U(\varphi(t)) \right\rangle=
 -\lambda_1,
\]
i.e., $\varphi$ is a characteristic curve for the PDE $\left\langle \nabla U(x), \nabla g(x)\right\rangle =-\lambda_1$. Then, we can define $g$ along the characteristics by $g(\varphi(t))=g(0)-\lambda_1 (t-t_0^\varphi)^+,$ being $t_0^\varphi=\inf\{ t\geq 0: \varphi(t)=0\}$. In addition, the characteristic curves for this PDE are well behaved in the sense that they cannot cross each other since the system \eqref{eq:ODE} has a uniqueness of the flow on $\R^d\smallsetminus \{0\}$. That is, although our system has an infinite number of solutions due to the Peano phenomenon, once a trajectory leaves the origin with a radius and an angle, it cannot merge with another trajectory. Furthermore, because the system \eqref{eq:ODE} is autonomous, if for a given $x\in \R^d\smallsetminus\{0\}$ there exists a characteristic $\varphi$ and $t_x$ such that $\varphi(t_x)=x$, then $\varphi_0(t):= \varphi(t+t_0^\varphi)$ is also a characteristic curve which is also extremal and passes through $x$ since $\varphi_0(t_x-t_0^\varphi)=x$. Then, it is enough to define $g$ along the extremal solutions of the Equation \eqref{eq:ODE}. These extremal solutions are uniquely determined by the angle at which they leave $0$, which we will call $\omega_0^{\varphi_0}= \underset{t\to 0^+}{\lim}\frac{\varphi_0(t)}{|\varphi_0(t)|}$.

Now, we prove that if $\varphi_0$ is an extremal  solution of \eqref{eq:ODE}, then it must be a homogeneous function of degree $\frac{1}{1-\gamma}$; i.e. $\varphi_0(\lambda t )= \lambda^{\frac{1}{1-\gamma}} \varphi_0(t)$, $\forall t\geq 0$, $\forall \lambda >0$. Let $\lambda>0$ be fixed and define $\psi_\lambda(t)= \lambda^{-\frac{1}{1-\gamma}} \varphi_0(t)$. We want to prove that $\psi_\lambda(t)=\varphi_0(t)$ $\forall t>0$. Due to the homogeneity of $\nabla U$, we have
\begin{align*}
\dot{\psi}_\lambda(t)& = \lambda^{-\frac{1}{1-\gamma}} \dot{\varphi}_0(\lambda t) \lambda = \lambda^{-\frac{1}{1-\gamma}+1} \nabla U\left(\varphi_0(\lambda t)\right) = 
\lambda^{-\frac{1}{1-\gamma}+1} \nabla U\left(\lambda^{\frac{1}{1-\gamma}}\psi_\lambda( t)\right)\\
& = \lambda^{-\frac{1}{1-\gamma}+1 + \frac{\gamma}{1-\gamma}} \nabla U\left(\psi_\lambda( t)\right) = \nabla U\left( \psi_\lambda(t) \right).
\end{align*}
Then, $\psi_{\lambda}$ is an extremal solution of \eqref{eq:ODE} and
\[
\omega_0^{\psi_\lambda}= \lim_{t\to 0^+} \frac{\psi_\lambda(t)}{\left\vert \psi_\lambda(t)\right\vert}= \lim_{t\to 0^+} \frac{\varphi_0( \lambda t)}{\left\vert \varphi_0( \lambda t)\right\vert}= \omega_0^{\varphi_0},
\] 
then it must be $\psi_\lambda(t)= \varphi_0(t)$ for all $t>0$.

From the homogeneity of the extreme characteristic curves, we can deduce that if we impose that $g(0)=0$, then $g$ defined from these characteristic curves must also be a homogeneous function. If $x\neq 0$ is such that there exists an extremal solution $\varphi_0$ and $t_x$ such that $\varphi_0(t_x)=x$, then we have defined $g(x)=-\lambda_1 t_x$. Let $\lambda>0$ be fixed, then
\[
g(\lambda x)= g\left( \lambda \varphi_0(t_x) \right) = g\left( \varphi_0(\lambda^{1-\gamma} t_x) \right) = -\lambda_1 \lambda^{1-\gamma} t_x = \lambda^{1-\gamma} g(x),
\]
due to the homogeneity of $\varphi_0$.

To conclude, we should note that for the study of the second-order LDP, we are interested only in $x$ belonging to the path of some of the solutions of Equation \eqref{eq:ODE}. However, we can explicitly write $g(x)$ in terms of $x$ when $|x|\to \infty$. For a fixed characteristic curve $\varphi$, let us introduce the functions $r(t)=\left\vert \varphi(t)\right\vert$ and $\omega(t)=\frac{\varphi(t)}{|\varphi(t)|}$ which are respectively the radial and angular components of $\varphi(t)$. Then, $\left(r(t), \omega(t)\right)$ must verify
\[
\begin{cases} \frac{\partial}{\partial t} r = r^\gamma (1+\gamma) \theta(\omega);\\
\frac{\partial}{\partial t} \omega = r^{\gamma-1} \left( \nabla \theta(\omega)-\left\langle \nabla \theta(\omega), \omega\right\rangle \omega \right).\end{cases}
\] 
Let us observe that, using the first equation, we obtain $\frac{\partial}{\partial t} r(t)>0$ since $\theta>0$, which implies that $r(t)\to + \infty$ when $t\to \infty$. Furthermore, this behavior suggests that for large values of $t$, the derivative of $\omega(t)$ is small, meaning that $\omega(t)$ remains close to a constant. In other words, the characteristic curves have an expansive behavior. From the first equation, we get
\[
r(t)=\left[(1+\gamma)\int_{t_0^\varphi}^t \theta(\omega(s))\text{d}s\right]^{\frac{1}{1-\gamma}}=\left[(1+\gamma)\theta( \omega(\hat{s})) (t-t_0^\varphi)\right]^{\frac{1}{1-\gamma}},
\]
for some $\hat{s} \in (t_0^\varphi, t)$. Now, for some $x\in \R^d$, define $t_x$ and $\varphi$ such that $\varphi(t_x)=x$, then
\[
|x|=r(t_x)= \left[(1+\gamma)\theta( \omega(\hat{s}_x)) (t_x-t_0^\varphi)\right]^{\frac{1}{1-\gamma}},
\]
and $(t_x-t_0^\varphi)^+= |x|^{1-\gamma} \frac{1}{(1+\gamma) \theta(\omega(\hat{s}_x))}$. Then, we get
$
g(x)=-\lambda_1 \frac{1}{(1+\gamma) \theta(\omega(\hat{s}_x))} |x|^{1-\gamma}
$, and it can be seen that 
\[
\lim_{|x| \to \infty} \frac{g(x)}{-\frac{\lambda_1}{(1+\gamma) \theta (\frac{x}{|x|})}|x|^{1-\gamma}}=1.
\]

\begin{remark}
 If there exists a function $g$ such that $\left\langle \nabla U(x), \nabla g(x)\right\rangle = -\lambda_1$ $\forall x$, then $\left\langle \nabla U(\lambda x), \nabla g(\lambda x)\right\rangle = -\lambda_1$ $\forall x$, $\forall \lambda>0$, and we have
\[
\left\langle \nabla U(x), \nabla g(x)\right\rangle = \left\langle \nabla U(\lambda x), \nabla g(\lambda x)\right\rangle \Leftrightarrow \left\langle \nabla U(x), \nabla g(x)-\lambda^\gamma \nabla g(\lambda x)\right\rangle =0
\]
for all $x$, for all $\lambda>0$. Since $\nabla U(x)\neq 0$ if $x\neq 0$, then it must to be $\nabla g(\lambda x)= \lambda^{-\gamma}\nabla g(x)$, or $\nabla g(x)-\lambda^\gamma \nabla g(\lambda x) \neq 0$ and it must to be perpendicular to $\nabla U(x)$ for all $x$. We have proved before that if $g(0)=0$, then $g$ defined along the characteristics must be homogeneous, so the first option is verified, i.e., $\nabla g(x)$ must be a homogeneous function of degree $-\gamma$. 
\end{remark}

\begin{remark}
The hypothesis that the drift $b$ comes from a homogeneous potential allows us to establish a relationship between the semigroups 
\[
P_t^{X^\ve}(x)(x)=\E \left[ f(X_t^\ve)|X_0^\ve=x\right] \text{ and } 
T_t^V(f)(x)= \E \left[ f(W_t) e^{-\int_0^t V(W_s)\text{d}s}|W_0=x\right].
\]
We have proved in Lemma \ref{prop:p converges to expU psi1} that the exponential behavior of $X_t^\ve$ depends only on the principal eigenvalue and eigenvector of the linear generator of $T_t^V$, since 
\[
\lim_{\ve \to 0} \ve_\gamma \log\left(p^\ve(t,x)\right)=-\lambda_1 t + \lim_{\ve \to 0} \ve_\gamma \log \left[ e^{U\left(\xe \right)}\psi_1\left(\xe \right)\right],
\]
being $\psi_1$ the ground state of $-\mathcal{L}(f)(x)=-\frac{1}{2}\Delta f(x)+ \frac{1}{2}\left( \left\vert \nabla U(x)\right\vert^2 + \Delta U(x)\right) f(x)$.  Let's define
\[
g_\ve(x):= -\ve_\gamma \log \left[ e^{U\left(\xe \right)}\psi_1\left(\xe \right)\right].
\]
Since $\psi_1$ verifies $-\frac{1}{2}\Delta \psi_1(x)+ \frac{1}{2}\left( \left\vert \nabla U(x)\right\vert^2 + \Delta U(x)\right) \psi_1(x)=\lambda_1 \psi_1(x),$ then $g_\ve(x)$ verifies
\[
-\frac{\ve^2}{2} \Delta g_\ve(x) + \frac{1}{2} \frac{\ve^2}{\ve_\gamma} \left\vert \nabla g_\ve(x)\right\vert^2 + \left\langle \nabla U(x), \nabla g_\ve(x)\right\rangle - \Delta U(\xe)=-\lambda_1.
\]
In particular, $g_\ve$ is a classical solution of the above equation. By letting $\ve \to 0$, we have the following limit equation
\[
\left\langle \nabla U(x), \nabla g(x)\right\rangle = -\lambda_1.
\]
If we could prove that the above equation verifies a Comparison Principle, then we could deduce that the limit  $\underset{\ve \to 0}{\lim} g_\ve (x)$ exists and it is $g(x)$ with $g$ verifying that equation. However, as we mentioned before, we could not prove that this equation verifies the Comparison Principle, so we had to prove the existence of the limit by proving the upper and lower limits from the upper and lower bounds of the ground state $\psi_1$.  Moreover, that is why we prove the existence of such a $g$ using characteristic curves.
\end{remark}

\subsection{Proof of Theorem \ref{thm:Second LDP}} \label{subsection: Second order LDP}

Finally, in this section, we prove the second-order LDP for the family of stochastic processes $\left\{ X^\ve \right\}_\ve$ from the lower and upper bounds obtained for the density $p^\ve(t,x)$. For the proof, we use the following lemma, which reduces the lower and upper bounds of the LDP definition to study the exponential bounds for open and closed balls.

\begin{lemma}\label{lemma:lower-upper LDP}
Let $\left\{ \PP^\ve \right\}_\ve$ a family of probability measures. 
\begin{enumerate}
\item \emph{(Lower LDP)} If for any $x\in \mathcal{X}$ and $\delta >0$, we have
\[
\liminf_{\ve \to 0} \lambda(\ve)^{-1} \log \left(\PP^\ve (B(x, \delta))\right) \geq -I(x) - \mathcal{O}(\delta),
\]
then for each $A\subset \mathcal{X}$ open,
\[
\liminf_{\ve \to 0} \lambda(\ve)^{-1} \log \PP^{\ve}(A) \geq - \inf_{x\in A} I(x). 
\]
\item \emph{(Upper LDP)} If moreover $\{ \PP^\ve \}_\ve$ is exponentially tight, and for any $x\in \mathcal{X}$ and $\delta >0$, we have
\[
\limsup_{\ve \to 0} \lambda(\ve)^{-1} \log \left(\PP^\ve (\overline{B(x, \delta)})\right) \leq -I(x) + \mathcal{O}(\delta),
\]
then for each $C\subset \mathcal{X}$ closed,
\[
\limsup_{\ve \to 0} \lambda(\ve)^{-1} \log \PP^{\ve}(C) \leq - \inf_{x\in C} I(x).
\]
\end{enumerate}
\end{lemma}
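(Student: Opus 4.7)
The plan is to establish the two parts by standard topological reductions from the ball-level hypotheses; neither direction requires any deep new idea beyond carefully exploiting the hypotheses combined with monotonicity (for the lower bound) and a finite subcover argument together with exponential tightness (for the upper bound).

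For the lower bound, fix an open set $A \subset \mathcal{X}$ and any $x \in A$. Since $A$ is open, there exists $\delta_x > 0$ with $B(x, \delta_x) \subset A$. By monotonicity of $\PP^\ve$, for every $\delta \in (0, \delta_x]$,
\[
\liminf_{\ve \to 0} \lambda(\ve)^{-1} \log \PP^\ve(A) \;\geq\; \liminf_{\ve \to 0} \lambda(\ve)^{-1} \log \PP^\ve(B(x, \delta)) \;\geq\; -I(x) - \mathcal{O}(\delta).
\]
Sending $\delta \to 0$ yields $\liminf_{\ve \to 0} \lambda(\ve)^{-1} \log \PP^\ve(A) \geq -I(x)$, and taking the supremum over $x \in A$ gives the claim.

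For the upper bound I would combine exponential tightness with a finite subcover. Fix $\beta > 0$ and, by exponential tightness, choose a compact $K_\beta \subset \mathcal{X}$ with $\limsup_{\ve \to 0}\lambda(\ve)^{-1}\log \PP^\ve(K_\beta^c) \leq -\beta$. For any closed $C$, the intersection $C \cap K_\beta$ is compact. Given $\delta > 0$, cover $C \cap K_\beta$ by the open balls $\{B(x, \delta): x \in C \cap K_\beta\}$ and extract a finite subcover $\{B(x_i,\delta)\}_{i=1}^{N(\delta)}$ with $x_i \in C$. Then
\[
\PP^\ve(C) \;\leq\; \sum_{i=1}^{N(\delta)} \PP^\ve\bigl(\overline{B(x_i, \delta)}\bigr) + \PP^\ve(K_\beta^c).
\]
Using the elementary inequality $\lambda^{-1}\log(a_1 + \cdots + a_k) \leq \lambda^{-1}\log k + \max_i \lambda^{-1}\log a_i$ and letting $\ve \to 0$ (so $\lambda(\ve)^{-1}\log N(\delta) \to 0$), the hypothesis applied at each $x_i \in C$ combined with $I(x_i) \geq \inf_{x \in C} I(x)$ gives
\[
\limsup_{\ve \to 0} \lambda(\ve)^{-1}\log \PP^\ve(C) \;\leq\; \max\Bigl( -\inf_{x \in C} I(x) + \mathcal{O}(\delta),\ -\beta \Bigr).
\]
Sending $\delta \to 0$ and then $\beta \to \infty$ produces the required upper bound.

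The only delicate point is choosing the finite subcover so that the centers lie in $C$ itself (not merely in $K_\beta$), which is what allows the hypothesis to be compared with $\inf_{x \in C} I(x)$; this is why we cover $C \cap K_\beta$ rather than $K_\beta$. Beyond that, both implications reduce to routine topology plus the standard ``$\log$-of-a-sum'' estimate that is ubiquitous in large deviation arguments.
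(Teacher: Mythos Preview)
Your argument is correct and is exactly the standard textbook derivation of the full LDP from ball-level estimates plus exponential tightness. The paper does not give its own proof of this lemma but simply refers to Chapter~4 of \cite{Dembo}; the proof there follows precisely the route you take (monotonicity for the lower bound, exponential tightness plus a finite subcover and the log-of-a-sum estimate for the upper bound).
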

For the proof, see, for example, Chapter 4 in \cite{Dembo}.
\bigskip

\begin{proof}[Proof of Theorem \ref{thm:Second LDP}]
The proof follows the same scheme as the proof of Theorem {\bf 3.11} from \cite{Pappalettera} once we have proved the existence of the exponential limit of $p^\ve(t,x)$,
\[
\underset{\ve \to 0}{\lim} \ve_\gamma \log \left(p^\ve (t,x)\right)=-\lambda_1 t-g(x).
\]
We first note that since $\left\{X^\ve \right\}_\ve$ is exponentially tight with rate $\ve^{-2}$ (as proved in Theorem \ref{thm:First LDP}), then it is exponentially tight with rate $\ve^{-1}_\gamma$ since, given $\alpha>0$, there exists a compact $K_\alpha \subset C_0\left([0,T], \R^d \right)$ such that 
$
\underset{\ve \to 0}{\limsup} \, \ve^2 \log \PP \left(X^\ve \notin K_\alpha \right) \leq -\alpha,
$
then
\[
\limsup_{\ve \to 0} \ve_\gamma \log \PP \left(X^\ve \notin K_\alpha \right)= \limsup_{\ve \to 0} \frac{\ve_\gamma}{\ve^2} \ve^2 \log \PP \left(X^\ve \notin K_\alpha \right)=-\infty < -\alpha.
\]
If $\varphi$ is not a solution for Equation \eqref{eq:ODE}, then $I_1(\varphi)>0$, and
\[
\liminf_{\ve \to 0} \ve_\gamma \log \PP \left(X^\ve \in B(\varphi, \delta) \right) \leq \limsup_{\ve \to 0} \frac{\ve_\gamma}{\ve^2} \ve^2 \log \PP\left(X^\ve \in   \overline{B(\varphi, \delta)}\right)=-\infty,
\]
since $\underset{\ve \to 0}{\limsup}\,  \ve^2 \log \PP\left(X^\ve \in   \overline{B(\varphi, \delta)}\right)=-I_1(\varphi)+\mathcal{O}(\delta)$ due to Theorem \ref{thm:First LDP}; i.e., $I_2(\varphi)=+\infty$. If $\varphi$ is a solution for Equation \eqref{eq:ODE}, due to Lemma \ref{lemma:lower-upper LDP}, it is enough to prove that 
\[
\liminf_{\ve \to 0} \ve_\gamma \log \PP\left(\left\Vert X^\ve - \varphi\right\Vert_\infty <\delta\right) \geq -I_2(\varphi)-\mathcal{O}(\delta) \qquad \text{(lower bound),}
\]
and
\[
\limsup_{\ve \to 0} \ve_\gamma \log \PP\left(\left\Vert X^\ve - \varphi\right\Vert_\infty \leq \delta\right) \leq -I_2(\varphi)-\mathcal{O}(\delta) \qquad \text{(upper bound).}
\]
Let be $\delta>0$ and $0<\eta<\delta$, and define the set
\[
\Gamma_{\delta, \eta}=\Gamma_{\delta, \eta}(\varphi):=\left\{ f\in C_0\left([0,t], \R^d\right):\, \left\Vert \varphi-f\right\Vert_\infty \geq \delta; \, \left\vert \varphi(T)-f(T)\right\vert\leq \delta-\eta\right\}.
\]
Since $\Gamma_{\delta,\eta}$ is closed in $C_0\left([0,T], \R^d\right)$, $I_1$ attains its minimum on $\Gamma_{\delta, \eta}$ at a function $f_{*}$. Let us see that $f_{*}$ cannot be a solution of \eqref{eq:ODE}. If $\varphi_1, \varphi_2$ are two different solutions of \eqref{eq:ODE}, then the function $h(t):=\left\vert \varphi_1(t)-\varphi_2(t)\right\vert$ is monotone non-decreasing. Then, if $f_*$ were a solution of \eqref{eq:ODE}, it could not be possible to have $\underset{t\leq T}{\sup}\left\vert \varphi(t)-f_*(t)\right\vert=\left\vert \varphi(T)-f_*(T)\right\vert>\delta$ and $\left\vert \varphi(T)-f_*(T)\right\vert\leq \delta-\eta$. Moreover, since
\[
\PP\left( \left\Vert X^\ve-\varphi \right\Vert_\infty \leq \delta \right) = \PP \left( \left\vert X^\ve_t-\varphi(t) \right\vert \leq \delta \right)
-\PP \left( \left\vert X^\ve_t-\varphi(t) \right\vert \leq \delta;\, \left\Vert X^\ve-\varphi \right\Vert_\infty > \delta \right),
\]
and
\begin{align*}
\PP \left( \left\vert X^\ve_t-\varphi(t) \right\vert \leq \delta;\, \left\Vert X^\ve-\varphi \right\Vert_\infty > \delta \right) & \leq
\PP\left(X^\ve \in \Gamma_{\delta,\eta}\right) + \PP\left(\delta-\eta < \left\vert X^\ve_T -\varphi(T)\right\vert \leq \delta\right)\\
& \leq \PP\left(X^\ve \in \Gamma_{\delta,\eta}\right) + \frac{1}{2} \PP\left( \left\vert X^\ve_T -\varphi(T)\right\vert \leq \delta\right),
\end{align*}
if $\eta$ is sufficiently small, we deduce that
\[
0< \frac{1}{2} \PP\left( \left\vert X^\ve_T -\varphi(T)\right\vert \leq \delta\right)-\PP\left(X^\ve \in \Gamma_{\delta,\eta}\right) \leq 
\PP\left( \left\Vert X^\ve-\varphi \right\Vert_\infty \leq \delta \right) \leq \PP\left( \left\vert X^\ve_T -\varphi(T)\right\vert \leq \delta\right).
\]
Now, we prove the lower bound.
\begin{align*}
\liminf_{\ve \to 0} \ve_\gamma \log \PP \left( \left\Vert X^\ve - \varphi\right\Vert_\infty<\delta \right) & \geq \liminf_{\ve \to 0} \ve_\gamma \log \left[ \frac{1}{2} \PP\left( \left\vert X^\ve_T -\varphi(T)\right\vert < \delta\right)-\PP\left(X^\ve \in \Gamma_{\delta,\eta}\right)\right] \\
&= \max\left\{ \liminf_{\ve \to 0}  \ve_\gamma \log \PP\left( \left\vert X^\ve_T -\varphi(T)\right\vert \leq \delta\right); \, \liminf_{\ve \to 0}  \ve_\gamma \log \PP\left(X^\ve \in \Gamma_{\delta,\eta}\right)\right\}.
\end{align*}
Since $\underset{\ve \to 0}{\lim} \ve^2 \log \PP \left( X^\ve \in \Gamma_{\delta, \eta}\right)=-I_1(f_*)<0$, then 
\[
 \liminf_{\ve \to 0}  \ve_\gamma \log \PP\left(X^\ve \in \Gamma_{\delta,\eta}\right) =  \liminf_{\ve \to 0}  \frac{\ve_\gamma}{\ve^2} \ve^2 \log \PP\left(X^\ve \in \Gamma_{\delta,\eta}\right)=-\infty.
\]
Finally,
\begin{align*}
\liminf_{\ve \to 0} \ve_\gamma \log \PP \left( \left\Vert X^\ve - \varphi\right\Vert_\infty<\delta \right) & \geq 
\liminf_{\ve \to 0} \ve_\gamma \log \PP \left( | X^\ve_T - \varphi(T)|<\delta \right)\\
& = \liminf_{\ve \to 0} \ve_\gamma \log \int_{B(\varphi(T), \delta)}p^\ve (T,x) \text{d}x\\
& = \liminf_{\ve \to 0} \ve_\gamma \log\Big[ \text{vol}\left(B(\varphi(T), \delta)\right) \\
& \times \int_{B(\varphi(T), \delta)}\frac{p^\ve (T,x)}{\text{vol}\left(B(\varphi(T), \delta)\right)} \text{d}x \Big]\\
& \geq \int_{B(\varphi(T), \delta)} \frac{-\lambda_1 T-g(x)}{\text{vol}\left(B(\varphi(T), \delta)\right)} \text{d}x \\
& \to_{\delta} -\lambda_1T - g(\varphi(T)):= -I_2(\varphi)
\end{align*}
\bigskip
Due to the exponential tightness, the upper LDP is proved analogously.
\end{proof}

As a corollary, we deduce that the family of processes $\left\{X^\ve \right\}_\ve$ converges to the set of extremal solutions of Equation \eqref{eq:ODE} since
\[
I_2(\varphi)=\lambda_1 T + g\left(\varphi(T)\right)=\lambda_1 T +g(0)-\lambda_1\left(T-t_0^{\varphi}\right)^+ = 0,
\]
if $t_0^\varphi = 0$, and $I_2(\varphi) >0$ if $t_0^\varphi> 0$ ($g(0)=0$ due to the homogeneity of $g$).

\section{Final comments and directions for future work}\label{section:final comments}
Finally, in this section, we briefly describe the study of large deviations for the Peano phenomenon presented in \cite{Herrmann} and \cite{Pappalettera}, and explain how these results can be interpreted in the context of our work. Moreover, we present some comments on extending the results of this paper.
\bigskip

In \cite{Herrmann}, a study of large deviations is performed for the Peano phenomenon in the particular case where $d=1$ and the drift is of the form $b(x)=\text{sgn}(x)|x|^{\gamma}=x |x|^{\gamma-1}$. For this case, two extremal solutions exist for the ODE $\dot{x}=b(x);$ $x(0)=0$, and they are calculated explicitly as $\varphi_1(x)=\left((1-\gamma)t\right)^{\frac{1}{1-\gamma}}$ and $\varphi_2(x)=-\left((1-\gamma)t\right)^{\frac{1}{1-\gamma}}$. Next, if $p^\ve(t,x)$ is the density of the random variable $X^\ve_t$, it is noted that the behavior of $p^\ve(t,x)$ is different depending on whether or not $(t,x)$ is in the region enclosed by the graphs of $\varphi_1$ and $\varphi_2$.

\begin{itemize}
\item If the point $(t,x)$ is outside the region enclosed by the graphs of $\varphi_1$ and $\varphi_2$, there exists a positive function $k_t$ such that ${\displaystyle \lim_{\ve \to 0} \ve^2 \log \left(p^\ve (t,x)\right) = -k_t(|x|)}$. Then, the density has an exponential decay with rate $\ve^2$, and the rate is the same as in the case of \cite{F&W} when the dynamical system has a unique solution. 
\item If the point $(t,x)$ lies in the domain between the graphs of  $\varphi_1$ and $\varphi_2$, then it is proved that ${\displaystyle \lim_{\ve \to 0} \ve^2 \log \left(p^{\ve} (t,x)\right) \leq 0}$, and the density has an exponential decay with a different rate, namely $\ve_{\gamma}=\ve^{2\frac{1-\gamma}{1+\gamma}}$. Precisely, it is proved that $\underset{\ve \to 0}{\lim} \ve_{\gamma} \log \left(p^{\ve}(t,x)\right) = \lambda_1 \left( \frac{|x|^{1-\gamma}}{1-\gamma} - t \right),$
where $\lambda_1$ is the first positive eigenvalue of the Schrödinger operator $-\frac{1}{2}\frac{\partial^2}{\partial^2 x}+ \frac{\gamma}{2|x|^{1-\gamma}} + \frac{|x|^{2\gamma}}{2}$.  
\end{itemize} 

Now, from our work, it is relatively straightforward to interpret this result: if $(t,x)$ is in the region enclosed by $\varphi_1$ and $\varphi_2$, then there exists a (non-extremal) solution $\varphi$ of the ODE such that $\varphi(t)=x$ and we know that $I_1(\varphi)=0$, being $I_1$ the rate of the first-order LDP. We further know that the rate of the second-order LDP for that solution is $I_2(\varphi)= \lambda_1 T -\lambda_1\left(T-t_0^\varphi\right)^+$. Note also that for $d>1$, it would be impossible to distinguish between the regions enclosed by the (infinite) extremal solutions, so it would not be possible to study the exponential behavior of the density $p^\ve(t,x)$ according to the location of the point $(t,x)$. This is why a study of large deviations in which the LD rate is defined for the possible limiting trajectories of $X^\ve$ is critical for generalizing this work to more general drift functions. 

On the other hand, the proof of the large deviation for the density $p^\ve(t,x)$ for $(t,x)$ enclosed between the extremal solutions makes essential use of the explicit viscosity solution $u(t,x)$ for the following Hamilton-Jacobi equation:
\begin{equation}\label{eq:H-J-Hermann}
\frac{\partial}{\partial t} u+ H\left(x, \frac{\partial}{\partial x} u\right)=0,
\end{equation}
where the Hamiltonian is $H(x,p)=x^\gamma p$. This equation comes from considering the following representation of the density (see Corollary 1 from \cite{Herrmann})
\[
p^\ve(t,x)= \frac{1}{\ve \sqrt{2\pi t}} \exp\left\{ \frac{|x|^{\gamma + 1}}{\ve^2 (\gamma + 1)} - \frac{x^2}{2\ve^2 t}\right\} \times \E_{\xe} \left[ \exp\left\{ -\int_0^{\frac{t}{\ve_\gamma}} \frac{V(W_s)}{2} \text{d}s\right\} \big| W_{\frac{t}{\ve_\gamma}}=0 \right],
\]
in terms of the Shrödinger semigroup
\[
T_t(f)(x)=\E_x \left[ f(W_t) \exp\left\{ -\frac{1}{2} \int_0^t V(W_s) \text{d}s\right\}\right]
\]
for the potential $V(x)=\frac{\gamma}{|x|^{1-\gamma}}+ |x|^{2\gamma}$, and the Rosenblatt theorem (see \cite{Rosenblatt} ), which states that if $V$ is bounded below and
\[
Q(t,x)=\frac{1}{(2\pi t)^\frac{d}{2}} e^{-\frac{|x|^2}{2t}} \E \left[ e^{-\frac{1}{2} \int_0^t V(W_s)\text{d}s }  \big| W_t=x \right],
\]
then $Q(t,x)$ is solution of $\frac{\partial}{\partial t}Q(t,x)=\frac{1}{2}\Delta Q(t,x)-\frac{1}{2}V(x)Q(t,x)$. Then, it can be proved that $\mu^\ve(t,x):=-\ve_\gamma \log\left( p^\ve (t,x) + e^{-\frac{K}{\ve_\gamma}}\right)$ is a clasical solution of 
\[
\frac{\partial}{\partial t} \mu^\ve + H^\ve \left(t, x, \mu^\ve, \frac{\partial}{\partial x} \mu^\ve, \frac{\partial^2}{\partial^2 t} \mu^\ve\right)=0
\]
for a Hamiltonian $H^\ve$ converging to $H$. Then, the limit ${\displaystyle \lim_{\ve \to 0} \ve_\gamma \log \left(p^\ve(t,x)\right)}$ is obtained from $u(t,x)$ which is the limit of $\mu^\ve(t,x)$ when $\ve \to 0$. This is a powerful analytical approach. However, the need to explicitly know $\mu(t,x)$ is also the main restriction in extending the theory to higher dimensions. In higher dimensions, this approach fails unless special symmetries allow a dimensionality reduction, as is done in \cite{Pappalettera}. Moreover, proving that $u(t,x)$ is the unique viscosity solution of the Hamilton-Jacobi equation \eqref{eq:H-J-Hermann} presents a great difficulty for the general case, since at least we cannot prove that this equation verifies the Comparison Principle by using the classic tools for proving this type of uniqueness.

\bigskip

Finally, we present some comments on extending the results of this paper.

The first question that naturally arises is whether it is possible to study an LDP for an even slower speed than $\ve_\gamma$, which would make it possible to identify a favorite among the set of extreme solutions. In  \cite{pilipenko2025} it is conjectured that the most likely extreme solutions should be those for which  occurs:
\[
\theta\left(\omega_0^{\varphi_0}\right) = \underset{z\in \mathbb{S}^{d-1}}{\sup} \theta(z). 
\]
Will it be possible to prove this from a study of large deviations of a higher order?
\bigskip

In another line of research, we assume that Equation \eqref{eq:ODE} presents a single Peano point. Since $b(x)=\nabla U(x)=\theta_1(\frac{x}{|x|})|x|^\gamma$ and $\theta_1$ is such that $|\theta_1(y)|^2>(\theta(y))^2$ for all $y \in \mathbb{S}^{d-1}$, then $b$ only cancels at $0\in \R^d$ if we assume that $\theta$ is a strictly positive function on $\mathbb{S}^{d-1}$. Future work could determine what happens when Equation \eqref{eq:ODE} has more than one Peano point, that is, whether $\theta_1$ can be canceled on $\mathbb{S}^{d-1}$. 
\bigskip

Another possible line of research could be to analyze the case in which $b(x)=\nabla U(x)$; however, $U$ is not homogeneous. That is, to analyze the case where the semigroup $P_t^{X^\ve}(f)(x)$ is related to $T_t(f)(x)=\E \left[ f(W_t) e^{-\int_0^t V^{\ve} (W_s)\text{d}s}\right]$, but the potential $V^\ve$ depends on $\ve$. What happens to the generator $-\mathcal{L}^{\ve}$? Will it still be true that the only eigenvector influencing the large deviation is $\psi_1^\ve$?
\bigskip

More generally, what happens when $b$ does not come from a potential? One possible way for further research could be to follow the strategy proposed by \cite{feng2014large}, which is based on the study of the convergence of nonlinear semigroups associated with the family $\left\{X^\ve\right\}_\ve$. As was mentioned before, the main difficulty we find with this strategy is proving the uniqueness of solutions to the Hamilton-Jacobi equations involved.



\newpage
\bibliographystyle{alea3}
\bibliography{References}

\end{document}